\documentclass[12pt,reqno]{amsart}
\usepackage[T1]{fontenc}
\usepackage{amsmath,amssymb,amsthm,mathrsfs}
\usepackage{newtxtext,newtxmath}
\usepackage{microtype}
\usepackage[colorlinks=true,linkcolor=blue,citecolor=blue,urlcolor=blue]{hyperref}
\numberwithin{equation}{section}
\newtheorem{theorem}{Theorem}[section]
\newtheorem{proposition}[theorem]{Proposition}
\newtheorem{lemma}[theorem]{Lemma}

\theoremstyle{definition}

\theoremstyle{remark}

\DeclareMathOperator{\vspan}{span}

\title[Stability near sums of fractional ground states]{Sharp stability near sums of ground states\newline for fractional Schr\"odinger equations}
\author{Hua Chen}
\address{School of Mathematics and Statistics, Wuhan University, Wuhan, 430072, China}
\email{chenhua@whu.edu.cn}
\author{Yun-Lu Fan}
\address{School of Mathematics and Statistics, Wuhan University, Wuhan, 430072, China}
\email{yunlufan@whu.edu.cn}
\author{Xin Liao}
\address{MOE-LCSM, School of Mathematics and Statistics, Hunan Normal University, Changsha, 410081, 
China}
\email{xin\_liao@whu.edu.cn}
\date{}
\keywords{Fractional Schr\"odinger equation, quantitative stability, ground states, nonlinear interaction, finite-dimensional reduction}
\subjclass[2020]{35R11, 35B35, 35J61}

\begin{document}
\begin{abstract}
We study quantitative stability for the fractional Schr\"odinger equation
$(-\Delta)^s u+u-|u|^\alpha u=0$ near finite sums of widely separated
positive ground states. For every $n\ge1$, $0<s<1$, and Sobolev-subcritical
exponent $\alpha>0$, we estimate the $H^s$ distance to the family of such
sums in terms of the $H^{-s}$ norm of the equation's residual. The optimal
rate changes at $\alpha=n/[2(n+2s)]$, with a logarithmic correction at
the threshold. Above the threshold the rate is
$t^{(n+2s)/(n+2s+1)}$; below it the exponent is
$[(1+\alpha)(n+2s)-n/2]/(n+2s+1)$.
The proof combines uniform invertibility away from the translation modes
with precise interaction estimates and a weighted correction of the
approximate configuration. This correction resolves the translation
interactions even when the nonlinearity has a small exponent.
We also construct nonnegative configurations that attain these rates.
The estimates quantify the effect of the algebraic decay of fractional
ground states on stability.
\end{abstract}
\maketitle

\section{Introduction}
Let $n\ge1$, $0<s<1$, and
\begin{equation}\label{eq:crit_exp}
  \begin{cases}
    0<\alpha<\dfrac{4s}{n-2s},&n>2s,\\[1mm]
    \alpha>0,&n\le2s.
  \end{cases}
\end{equation}
We consider the stationary fractional Schr\"odinger equation
\begin{equation}\label{eq:problem}
  (-\Delta)^s u+u-|u|^\alpha u=0
  \quad\text{in }\mathbb R^n.
\end{equation}
Our aim is to determine how the residual of this equation controls the
distance to a finite sum of ground states when their centers are far
apart. Throughout the paper, functions are real-valued, and the
fractional Laplacian has Fourier symbol $|\xi|^{2s}$.

The equation has a positive ground state $Q$, unique up to translation.
We take $Q$ to be radial about the origin. Its existence and qualitative
properties are discussed in \cite{FQT12}; uniqueness and nondegeneracy
were proved by Frank and Lenzmann \cite{FL13} in dimension one and by
Frank, Lenzmann and Silvestre \cite{FLS16} in arbitrary dimension.
In particular,
\begin{equation}\label{eq:ground-state}
  (-\Delta)^s Q+Q=Q^{1+\alpha},\qquad Q>0,
\end{equation}
and
\begin{equation}\label{eq:nondegeneracy}
  \ker\bigl(( -\Delta)^s+1-(1+\alpha)Q^\alpha\bigr)
  =\vspan\{\partial_{x_1}Q,\ldots,\partial_{x_n}Q\}.
\end{equation}
Unlike the ground state of the local massive equation, $Q$ has an
algebraic tail:
\begin{equation}\label{eq:ground-state-decay}
  Q(x)\simeq(1+|x|)^{-n-2s}.
\end{equation}
As a consequence, separated profiles interact at polynomial scales.
The relation between these interactions and their translation
projections determines the stability estimates below.

We use the energy norm
\begin{equation}\label{eq:energy-norm}
  \|v\|_{H^s}^2
  :=\|v\|_{L^2}^2+\|(-\Delta)^{s/2}v\|_{L^2}^2,
\end{equation}
and write $H^{-s}$ for its dual under the $L^2$ pairing. Set
\begin{equation}\label{eq:defect}
  L:=(-\Delta)^s+1,\qquad
  \mathcal E(u):=Lu-|u|^\alpha u,\qquad
  \Gamma(u):=\|\mathcal E(u)\|_{H^{-s}}.
\end{equation}
The restriction \eqref{eq:crit_exp} ensures that
$\mathcal E:H^s\to H^{-s}$ is well defined. With the norm
\eqref{eq:energy-norm}, $L:H^s\to H^{-s}$ is an isometric isomorphism.

Fix $m\ge2$. For translation parameters $y_1,\ldots,y_m\in\mathbb R^n$,
write
\begin{equation}\label{eq:configuration}
  Q_i(x):=Q(x+y_i),\qquad
  \sigma:=\sum_{i=1}^m Q_i,\qquad
  R:=\min_{i\ne j}|y_i-y_j|.
\end{equation}
Thus the physical centers are $-y_i$. The distance to the family of all such sums is
\begin{equation}\label{eq:distance}
  d_m(u):=\inf_{y_1,\ldots,y_m\in\mathbb R^n}
       \left\|u-\sum_{i=1}^mQ(\cdot+y_i)\right\|_{H^s}.
\end{equation}
The stability modulus has three regimes. Define
\begin{equation}\label{eq:stability-modulus}
  \alpha_*:=\frac{n}{2(n+2s)},\qquad
  \Phi(t):=
  \begin{cases}
    t^{\frac{n+2s}{n+2s+1}},&\alpha>\alpha_*,\\[1mm]
    t^{\frac{n+2s}{n+2s+1}}(1+|\log t|)^{1/2},&\alpha=\alpha_*,\\[1mm]
    t^{\frac{(1+\alpha)(n+2s)-n/2}{n+2s+1}},&0<\alpha<\alpha_*.
  \end{cases}
\end{equation}
Here $0<t\le1$, and $\Phi(0):=0$.

\begin{theorem}[Quantitative stability]\label{thm-quantitative}
Assume \eqref{eq:crit_exp} and fix $m\ge2$. There exist
$\varepsilon_0>0$, $R_0>1$, and $C>0$, depending only on
$n,s,\alpha,m$, with the following property. If $u\in H^s(\mathbb R^n)$
and some $\widetilde y_1,\ldots,\widetilde y_m\in\mathbb R^n$ satisfy
\[
  \left\|u-\sum_{i=1}^m Q(\cdot+\widetilde y_i)\right\|_{H^s}
       \le\varepsilon_0,
  \qquad
  \min_{i\ne j}|\widetilde y_i-\widetilde y_j|\ge2R_0,
\]
then there are parameters $y_1,\ldots,y_m$ with separation $R\ge R_0$
such that, for $\sigma$ as in \eqref{eq:configuration},
\begin{equation}\label{stable}
  \|u-\sigma\|_{H^s}\le C\Phi(\Gamma(u)),\qquad
  R^{-n-2s-1}\le C\Gamma(u).
\end{equation}
The parameters may be chosen so that
\begin{equation}\label{eq:modulation}
  \langle u-\sigma,\partial_{x_j}Q_i\rangle_{H^s}=0
  \quad(1\le i\le m,\ 1\le j\le n).
\end{equation}
In particular, $d_m(u)\le C\Phi(\Gamma(u))$.
\end{theorem}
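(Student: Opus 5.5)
We write $u=\sigma+\rho$, modulate, expand the residual around $\sigma$, and balance the translation projections against the interaction force.

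\emph{Step 1: modulation.} Starting from the $\widetilde y_i$, we apply the implicit function theorem to the map $(y_1,\dots,y_m)\mapsto\bigl(\langle u-\sigma,\partial_{x_j}Q_i\rangle_{H^s}\bigr)_{i,j}$. The Gram matrix of the modes $\partial_{x_j}Q_i$ is nearly block diagonal, with off-diagonal blocks $O(R^{-n-2s})$, so for $\varepsilon_0$ small and $R_0$ large we obtain parameters satisfying \eqref{eq:modulation}. These satisfy $|y_i-\widetilde y_i|\lesssim\varepsilon_0$, hence $R\ge R_0$, and $\|\rho\|_{H^s}\lesssim\varepsilon_0$.

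\emph{Step 2: expansion and coercivity.} We expand
\[
\mathcal E(u)=\mathcal E(\sigma)+L_\sigma\rho+N(\rho),\qquad L_\sigma:=L-(1+\alpha)\sigma^\alpha ,
\]
with $N(\rho)$ superlinear in $\rho$, of size $O(\|\rho\|^{\min(2,1+\alpha)})$. We use $\mathcal E(\sigma)=\sum_iQ_i^{1+\alpha}-\sigma^{1+\alpha}=:-I$. By nondegeneracy \eqref{eq:nondegeneracy} and a localization argument, $L_\sigma$ is uniformly invertible on the $H^s$-orthogonal complement of $\vspan\{\partial_{x_j}Q_i\}$ for $R\ge R_0$. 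Therefore
\[
\|\rho\|_{H^s}\lesssim\Gamma(u)+\|I\|_{H^{-s}}+\|\rho\|^{\min(2,1+\alpha)},
\]
and the last term is absorbed.

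\emph{Step 3: size of the interaction.} Near $-y_i$ we have $I\approx(1+\alpha)Q_i^\alpha\sum_{j\ne i}Q_j$, which gives an $H^{-s}$ contribution $\sim R^{-n-2s}$. In the intermediate region, where all profiles are $\lesssim R^{-n-2s}$ on a set of volume $\sim R^n$, the $L^2$ bound gives $R^{-(1+\alpha)(n+2s)+n/2}$. This second term dominates exactly when $\alpha<\alpha_*$. At $\alpha=\alpha_*$ a dyadic summation over the annuli $|x+y_i|\sim 2^k$ produces $R^{-n-2s}(\log R)^{1/2}$. In all cases $\|I\|_{H^{-s}}\lesssim\Psi(R)$, where $\Psi(R)$ denotes the expression obtained from $\Phi(t)$ by substituting $t=R^{-n-2s-1}$.

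\emph{Step 4: the translation projections.} Pairing the expansion with $\partial_{x_j}Q_i$ and using $L_\sigma\partial_{x_j}Q_i=(1+\alpha)(Q_i^\alpha-\sigma^\alpha)\partial_{x_j}Q_i$ (small), we get
\[
\langle I,\partial_{x_j}Q_i\rangle=O(\Gamma)+\text{(cross terms in }\rho)+\langle N(\rho),\partial_{x_j}Q_i\rangle .
\]
The leading force $\langle I,\partial_{x_j}Q_i\rangle$ equals $c_0\sum_{k\ne i}\partial_j|y_i-y_k|^{-n-2s}$ up to $o(R^{-n-2s-1})$, where $c_0=\int Q^{1+\alpha}\ne0$. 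To get a lower bound we use a clustering argument. We group the centers into clusters at scale $\lambda R$ with $\lambda$ large but fixed, and take a closest pair inside one cluster. We pick $i$ extremal in that cluster in the direction $e$ of the pair. All forces on $i$ from its cluster then have nonnegative $e$-component, at least one is $\gtrsim R^{-n-2s-1}$, and forces from other clusters are $O((\lambda R)^{-n-2s-1})$. Hence $\max_{i,j}|\langle I,\partial_{x_j}Q_i\rangle|\gtrsim R^{-n-2s-1}$.

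\emph{Main obstacle.} The difficulty is showing that the $\rho$-dependent terms in Step 4 are $o(R^{-n-2s-1})+O(\Gamma)$. The naive bound $\|\rho\|^{1+\alpha}\sim\Psi(R)^{1+\alpha}$ is too weak when $\alpha$ is small. This is where the weighted correction enters. We set
\[
w:=-L_\sigma^{-1}\Pi^\perp\mathcal E(\sigma),
\]
with $\Pi^\perp$ the projection off the translation modes, and prove pointwise weighted bounds $|w(x)|\lesssim R^{-n-2s}\sum_i(1+|x+y_i|)^{-\beta}$. This uses the decay of the kernel of $L^{-1}$ and a bootstrap in weighted $L^\infty$ spaces. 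We then replace $\sigma$ by $\sigma+w$, so the remainder $\rho-w$ is controlled by $\Gamma$ plus higher order terms. The projections of the nonlinear terms in $w$ against $\partial_jQ_i$ are then computed directly from the pointwise bounds. These integrals concentrate where $Q_i$ is of order one, and there $|w|\lesssim R^{-n-2s}$, so they are $O(R^{-2(n+2s)})$ even for small $\alpha$.

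\emph{Conclusion.} This yields $R^{-n-2s-1}\lesssim\Gamma(u)$. Inserting this into Steps 2 and 3 gives $\|\rho\|_{H^s}\lesssim\Gamma+\Psi(R)\lesssim\Phi(\Gamma(u))$, using that $\Psi$ is increasing in $R^{-1}$ and $\Phi(t)\ge t$ for $t\le1$. This proves \eqref{stable}, and the bound on $d_m(u)$ follows immediately.
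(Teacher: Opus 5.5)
Your Steps 1--3 and the closing bookkeeping are sound; the implicit function theorem is a fine substitute for the paper's minimization. The argument has a genuine gap, however, exactly at the point you flag as the main obstacle.

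Your correction $w=-L_\sigma^{-1}\Pi^\perp\mathcal E(\sigma)$ is linear. Hence $\Pi^\perp\mathcal E(\sigma+w)=\Pi^\perp N_\sigma(w)\neq0$, and subtracting the equations only gives $\|\rho-w\|_{H^s}\lesssim\Gamma(u)+\|N_\sigma(w)\|_{H^{-s}}$. This leftover is not negligible. Between the centers $\sigma\sim R^{-n-2s}$ and $|w|\sim R^{-(1+\alpha)(n+2s)}$: your weighted bound allows this, and it is the true size, since $w\approx I$ there. So $N_\sigma(w)\sim\sigma^{\alpha-1}w^2\sim R^{-(1+3\alpha)(n+2s)}$ on a region of volume $\sim R^n$, which gives $\|N_\sigma(w)\|_{H^{-s}}\sim R^{n/2-(1+3\alpha)(n+2s)}$.

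With only norm control of $\rho-w$, the best available bound for $\langle N_{\sigma+w}(\rho-w),\partial_{x_j}Q_i\rangle$ is $\|\rho-w\|_{H^s}^2$. This is sharp for a perturbation concentrated at the $i$-th core. Now $\|N_\sigma(w)\|_{H^{-s}}^2/R^{-n-2s-1}=R^{1-2s-6\alpha(n+2s)}$, which does not tend to zero when $2s+6\alpha(n+2s)\le1$. The naive bound $\|\rho\|_{H^s}^2$ fails when $2s+2\alpha(n+2s)\le1$; one linear step only improves this to $2s+6\alpha(n+2s)\le1$, so it does not close uniformly for small $\alpha$.

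The missing idea is to solve the full nonlinear projected equation $P_{\mathscr F^\perp}L^{-1}\mathcal E(\sigma+w_\sigma)=0$. The paper does this by a contraction for $w\mapsto[(\mathrm{id}-P_{\mathscr F^\perp}K)]^{-1}P_{\mathscr F^\perp}L^{-1}(f+N_\sigma(w))$, run simultaneously in $H^s$ and in the weighted sup-norm with weight $\sum_i(1+|x+y_i|)^{-\alpha(n+2s)}$. Then $f$ cancels exactly on subtraction, so $\|u-U_\sigma\|_{H^s}\lesssim\Gamma(u)$ with no leftover term. You never need to pair $u-U_\sigma$ with the modes: Lipschitz continuity of $\mathcal E$ gives $\Gamma(U_\sigma)\lesssim\Gamma(u)$, while $L^{-1}\mathcal E(U_\sigma)\in\mathscr F$ and the pointwise bounds on $w_\sigma$ give $\Gamma(U_\sigma)\simeq R^{-n-2s-1}$.

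A smaller point: for small $\alpha$ the $w$-integrals do not concentrate where $Q_i\sim1$. Near $-y_i$ the integrand is of order $R^{-2(n+2s)}(1+r)^{-2\alpha(n+2s)-1}$, so the integrals can be as large as $R^{n-2(1+\alpha)(n+2s)-1}$ rather than $O(R^{-2(n+2s)})$. This is still $o(R^{-n-2s-1})$, but it must be computed, together with the contributions near the other centers.

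Second, the clustering argument for the lower bound in Step 4 fails as stated. The center $y_i$ extremal in direction $e$ need not lie near the closest pair $y_a,y_b$. The only guaranteed contribution is the $e$-component of the force from $y_b$, which is at least $R/|y_b-y_i|^{n+2s+2}\gtrsim\lambda^{-(n+2s+2)}R^{-n-2s-1}$, because the cluster can have diameter of order $m\lambda R$. Forces from other clusters are only bounded by $m(\lambda R)^{-n-2s-1}$, which is larger by a factor of order $\lambda$.

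For instance, take $n\ge2$, $y_a=0$, $y_b=Re_1$, $y_c=-\delta e_1+\frac{\lambda R}{2}e_2$ with $\delta>0$ small, and a fourth center at $y_c-(\lambda R+1)e_1$. Once $\lambda$ is large, the $e_1$-component of the force on the extremal center $y_c$ is negative. No single fixed $\lambda$ repairs this; a multiscale pigeonhole choice of scales would.

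The paper avoids clustering altogether. Set $e=(y_b-y_a)/R$ and let $S$ be the set of indices on $y_a$'s side of the perpendicular bisector of $y_a,y_b$. Summing the leading forces over $i\in S$ cancels all interactions within $S$ in pairs. Every remaining term has positive $e$-component, and the pair $(a,b)$ alone contributes $R^{-n-2s-1}$. Since there are at most $m$ terms and the remainders are uniformly $o(R^{-n-2s-1})$, some projection is $\gtrsim R^{-n-2s-1}$.
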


By decreasing $\varepsilon_0$ and increasing $R_0$, the hypotheses
ensure that $\Gamma(u)\le1$, so that \eqref{eq:stability-modulus} applies.
For one profile, nondegeneracy instead gives the linear estimate
$d_1(u)\lesssim\Gamma(u)$ in a neighborhood of the family of translates
of $Q$. The nonlinear moduli in Theorem~\ref{thm-quantitative} concern
the interaction of at least two profiles.

\begin{theorem}[Optimality]\label{sharp}
Assume \eqref{eq:crit_exp} and fix $m\ge2$. There are nonnegative
functions $u_R\in H^s(\mathbb R^n)$, indexed by sufficiently large $R$,
and parameters $y_{1,R},\ldots,y_{m,R}$ with minimum separation $R$,
such that
\[
  \left\|u_R-\sum_{i=1}^mQ(\cdot+y_{i,R})\right\|_{H^s}\longrightarrow0,
  \qquad \Gamma(u_R)\longrightarrow0,
\]
and
\begin{equation}\label{eq:sharpness-main}
  d_m(u_R)\ge c\Phi(\Gamma(u_R))
\end{equation}
for some $c>0$ independent of $R$. Thus each rate in
\eqref{eq:stability-modulus} is optimal, including the logarithmic factor
at $\alpha=\alpha_*$.
\end{theorem}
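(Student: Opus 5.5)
We construct $u_R$ explicitly and use Theorem~\ref{thm-quantitative}, applied to $u_R$ itself, to pin down the near-optimal parameters. Take $m$ centers on a line, $y_{i,R}=iRe_1$, so consecutive profiles are at distance $R$. Let $\sigma_R=\sum_i Q(\cdot+y_{i,R})$. Define
\[
u_R:=\sigma_R+w_R,
\]
where $w_R$ is a correction, orthogonal in $H^s$ to all $\partial_{x_j}Q_i$, chosen to absorb the part of the interaction error $\mathcal E(\sigma_R)=\sigma_R^{1+\alpha}-\sum_i Q_i^{1+\alpha}$ transversal to the translation modes. It will be given by a regime-dependent formula in which only a piece of size $\Phi$ survives.

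First we compute the size of $\mathcal E(\sigma_R)$. Near $Q_i$ it behaves like $(1+\alpha)Q_i^\alpha\sum_{j\ne i}Q_j\sim R^{-n-2s}Q_i^\alpha$. In the far region, where all $Q_j\lesssim R^{-n-2s}$, the nonlinear term has size $(\sum_j Q_j)^{1+\alpha}$. Its $L^2$-type contribution is $R^{-(1+\alpha)(n+2s)}$ times a volume factor $R^{n/2}$, so it is of order $R^{-(1+\alpha)(n+2s)+n/2}$. Comparing the exponents $n+2s$ and $(1+\alpha)(n+2s)-n/2$ yields exactly the threshold $\alpha_*$. At $\alpha=\alpha_*$ the tail integral $\int Q_i^{2\alpha}Q_j^2$ diverges logarithmically, which produces the factor $|\log R|^{1/2}$.

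The projection of $\mathcal E(\sigma_R)$ onto $\partial_{x_1}Q_i$ is nonzero, of order $R^{-n-2s-1}$, because $\partial_{x_1}Q_j$ at distance $R$ has this size. We therefore do not take $u_R$ to be a solution. Instead we set $w_R:=-L_\sigma^{-1}P^\perp\mathcal E(\sigma_R)$ with the nonlinear fixed point solved only transversally, where $L_\sigma$ is the linearization at $\sigma_R$ and $P^\perp$ the projection removing the translation modes. Then $\mathcal E(u_R)$ consists of its projection onto the translation directions, so
\[
\Gamma(u_R)\simeq R^{-n-2s-1}.
\]
The lower bound uses the explicit leading interaction coefficient $\int Q^\alpha\,\partial_1Q\,|x|^{-n-2s}$-type asymptotics, which gives $\partial_1$ of the tail nonvanishing. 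Meanwhile $\|w_R\|_{H^s}\simeq\|P^\perp\mathcal E(\sigma_R)\|_{H^{-s}}$, and this is of order $R^{-(n+2s)}$, with the log factor at threshold or the far-field exponent below it. With $t=R^{-n-2s-1}$ this equals exactly $\Phi(t)$. Nonnegativity of $u_R$ should follow from a comparison or maximum principle for $L$ applied to the equation for $u_R$, since the far-field correction is positive. Alternatively we replace $u_R$ by $|u_R|$, which changes neither the distance nor $\Gamma$ at leading order; this step needs checking.

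Finally we need $d_m(u_R)\gtrsim\|w_R\|$. By Theorem~\ref{thm-quantitative}, any near-optimal parameters $y$ lie within $o(1)$ of $y_R$ (relative to the scale), since $d_m\le\|w_R\|\to0$. Then
\[
u_R-\sigma_y=w_R+(\sigma_R-\sigma_y).
\]
Here $\sigma_R-\sigma_y$ lies, up to quadratic error $|y-y_R|^2$, in the span of the translation modes, while $w_R$ is $H^s$-orthogonal to these modes. Hence $\|u_R-\sigma_y\|\ge\|w_R\|-O(|y-y_R|^2+R^{-n-2s}|y-y_R|)$, and the remaining term is dominated by the orthogonal decomposition $\|u_R-\sigma_y\|^2\gtrsim\|w_R\|^2+|y-y_R|^2$. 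This gives $d_m(u_R)\ge c\|w_R\|\ge c\Phi(\Gamma(u_R))$.

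The main obstacle is the two-sided sharp estimate of $\|P^\perp\mathcal E(\sigma_R)\|_{H^{-s}}$, that is, showing the far-field part is not cancelled in the $H^{-s}$ norm. We would bound it from below by testing against a smooth bump function adapted to the region between the profiles, at scale $R$, and use precise tail asymptotics $Q(x)\sim c|x|^{-n-2s}$.
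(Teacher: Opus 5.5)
Your construction (collinear centers, the transversally solved correction $w_R\in\mathscr F^\perp$, the orthogonality argument for $d_m$) matches the paper's. However, the upper bound $\Gamma(u_R)\lesssim R^{-n-2s-1}$ is asserted without a valid argument. This is the direction you actually need, since you must show $\Phi(\Gamma(u_R))\lesssim\Phi(R^{-n-2s-1})\simeq d_m(u_R)$. Knowing $L^{-1}\mathcal E(u_R)\in\mathscr F$ only reduces $\Gamma(u_R)$ to the pairings $\langle\mathcal E(u_R),\nabla Q_i\rangle$, and you then treat these as the pairings of $\mathcal E(\sigma_R)$. Using $w_R\in\mathscr F^\perp$, what you really have is
\[
 \langle\mathcal E(u_R),\nabla Q_i\rangle=-\int f\nabla Q_i\,dx-(1+\alpha)\int(\sigma_R^\alpha-Q_i^\alpha)w_R\nabla Q_i\,dx-\int N_{\sigma_R}(w_R)\nabla Q_i\,dx,
\]
where $\|w_R\|_{H^s}\simeq A(R)\gg R^{-n-2s-1}$. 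From the energy bound alone the nonlinear term is only $\lesssim A(R)^2$; the $H^s$ norm cannot rule out a contribution of that size concentrated at a core. For $0<\alpha<\alpha_*$, $A(R)^2=R^{n-2(1+\alpha)(n+2s)}$ is $o(R^{-n-2s-1})$ only if $2s+2\alpha(n+2s)>1$. For small $s$ and $\alpha$ (e.g.\ $n=1$, $s=0.1$, $\alpha=0.05$) it exceeds $R^{-n-2s-1}$ by a power of $R$, and the linear cross term has the same defect near the cores. The missing idea is pointwise control of the correction. The paper solves the projected problem in the weighted space with weight $W=\sum_i(1+|x+y_i|)^{-\alpha(n+2s)}$ (Lemmas~\ref{lem:weighted-linear-small-alpha} and \ref{lem:corrected-configuration}) and obtains $|w_R|\lesssim R^{-n-2s}W$. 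This makes both error terms $o(R^{-n-2s-1})$ (Lemma~\ref{lem:corrected-force}). For $\alpha\ge\alpha_*$ the energy bound suffices, so the gap sits exactly in the sub-threshold regime.

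Nonnegativity is also left open, and neither suggested route works as stated. A comparison principle is unavailable because $\mathcal E(u_R)=(1+\alpha)\sum_{i,j}c_{ij}Q_i^\alpha\partial_{x_j}Q_i$ changes sign. Passing to $|u_R|$ or $u_R^+$ is the right move, but ``unchanged at leading order'' is not enough. The distance is of order $A(R)$ while the residual must stay of order $R^{-n-2s-1}$, so you need $\|u_R^-\|_{H^s}\lesssim R^{-n-2s-1}$, far below the size of $w_R$. The paper gets this by testing $\mathcal E(u_R)$ against $-u_R^-$. The Gagliardo form gives $\|u_R^-\|_{H^s}^2\le\langle Lu_R,-u_R^-\rangle$. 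Since $0\le u_R^-\le|w_R|$, the term $\int(u_R^-)^{2+\alpha}\lesssim\|w_R\|_{H^s}^\alpha\|u_R^-\|_{H^s}^2$ is absorbed, so $\|u_R^-\|_{H^s}\lesssim\Gamma(u_R)$.

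The step you call the main obstacle is actually easy. Because $|\nabla f|\lesssim f$, one has $\|f\|_{H^{-s}}\simeq\|f\|_{L^2}$ (Lemma~\ref{norm}). Since $\|P_{\mathscr F}L^{-1}f\|_{H^s}\lesssim R^{-n-2s-1}=o(A(R))$, the triangle inequality gives $\|P_{\mathscr F^\perp}L^{-1}f\|_{H^s}\gtrsim A(R)$ with no test function. Two smaller points. First, $\mathcal E(\sigma_R)=-f$, not $+f$. Second, Theorem~\ref{thm-quantitative} does not say that every near-optimal configuration is close to $y_R$. That needs a direct matching argument, after which your orthogonality argument for $d_m(u_R)\gtrsim\|w_R\|_{H^s}$ goes through.
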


\subsection{Related results}
Quantitative stability asks whether the error in an inequality or an
equation controls distance to the corresponding set of optimizers or
solutions. For the Sobolev inequality, the theorem of Bianchi and
Egnell \cite{BE1991} bounds the squared distance to the optimizer
manifold by the Sobolev deficit. Stability of the Euler--Lagrange
equation involves a different quantity, namely the norm of its
residual. In the critical problem, Struwe's compactness theorem
\cite{Struwe84} identifies finite sums of bubbles as the relevant
limiting configurations.

Quantitative versions of this description were obtained by Ciraolo,
Figalli and Maggi \cite{CFM18}, Figalli and Glaudo \cite{FG20}, and
Deng, Sun and Wei \cite{Wei}. In the multi-bubble setting, the sharp
rate depends on the dimension, and a logarithmic correction occurs
at the transition. Fractional analogues were developed by De Nitti
and K\"onig \cite{NK23} and Aryan \cite{Aryan23}. These results concern
the critical, scale-invariant equation. In \eqref{eq:problem}, the mass
term fixes the scale of the ground state, leaving translations as the
only parameters.

For the local massive equation, Chen, Fan and Liao \cite{CFL25}
established sharp stability near sums of ground states. Here the algebraic
tail in \eqref{eq:ground-state-decay} separates the scale of the interaction
error from that of its translation projections by a power of the separation.
This yields a sublinear stability modulus even above the integrability
threshold, while the nonintegrable and borderline regimes produce a
different power and a logarithmic correction, respectively.
Weighted linear estimates and projected nonlinear equations are established
tools in the construction of fractional standing waves; see D\'avila,
del Pino and Wei \cite{DPW14}. Our use of these tools is to control a given
approximate solution and to recover the sharp residual scale uniformly
over separated configurations, including arbitrarily small admissible
exponents. The result is local to the family of separated positive profiles;
it does not assert a global decomposition for arbitrary bounded-residual
sequences or stability for arbitrary relative signs or complex phases.

\subsection{Outline of the proof}
The interaction error is
\begin{equation}\label{eq:interaction-error}
  f:=\sigma^{1+\alpha}-\sum_{i=1}^m Q_i^{1+\alpha},
  \qquad \mathcal E(\sigma)=-f.
\end{equation}
Its positivity and homogeneity imply $|\nabla f|\lesssim f$.
Interpolation and duality therefore give the useful equivalence
\[
  \|f\|_{H^{-s}}\simeq\|f\|_{L^2}\simeq\|f\|_{H^s}.
\]
For configurations with minimum separation $R$, the interaction estimates yield
\begin{equation}\label{eq:interaction-scales}
  \|f\|_{H^{-s}}\simeq
  \begin{cases}
    R^{-n-2s},&\alpha>\alpha_*,\\
    R^{-n-2s}(\log R)^{\frac{1}{2}},&\alpha=\alpha_*,\\
    R^{\frac{n}{2}-(1+\alpha)(n+2s)},&0<\alpha<\alpha_*.
  \end{cases}
\end{equation}
The threshold is exactly the condition $Q^\alpha\in L^2$.
Below it, the region between the centers contributes to the leading
interaction; at equality, the relevant radial integral is logarithmic.

The translation projections have the smaller scale
$R^{-n-2s-1}$. Radial symmetry cancels the constant part of a distant
profile, so that its first spatial variation supplies the leading
term. A geometric argument shows that these terms cannot cancel at
every center. To transfer this lower bound to a general nearby function,
we first choose its parameters so that $u=\sigma+\rho$ satisfies
\eqref{eq:modulation}.

Let $\mathscr F$ be the span of the translation modes
$\partial_{x_j}Q_i$, and let $P_{\mathscr F^\perp}$ denote the
$H^s$ orthogonal projection. Uniform invertibility on
$\mathscr F^\perp$ controls $\rho$ by $\Gamma(u)+\|f\|_{H^{-s}}$.
For small $\alpha$, however, this norm estimate alone does not make
the projected error negligible compared with $R^{-n-2s-1}$.
We therefore construct a correction $w_\sigma\in\mathscr F^\perp$
such that
\begin{equation}\label{eq:corrected-configuration}
  U_\sigma:=\sigma+w_\sigma,\qquad
  P_{\mathscr F^\perp}L^{-1}\mathcal E(U_\sigma)=0.
\end{equation}
The correction satisfies $\|w_\sigma\|_{H^s}\lesssim\|f\|_{H^{-s}}$.
For $\alpha<\alpha_*$ we also establish a weighted pointwise bound;
for $\alpha\ge\alpha_*$ the energy estimate and refined translation
projections suffice. Together these estimates give, for every admissible exponent,
\[
  \langle\mathcal E(U_\sigma),\nabla Q_i\rangle
  =-\int_{\mathbb R^n}f\nabla Q_i\,dx+o(R^{-n-2s-1}).
\]
The finite-dimensional residual is retained in
\eqref{eq:corrected-configuration}; the correction removes only its
orthogonal component.

For later use, write
\begin{equation}\label{eq:nonlinear-remainder}
  N_\sigma(w):=|\sigma+w|^\alpha(\sigma+w)
            -\sigma^{1+\alpha}-(1+\alpha)\sigma^\alpha w.
\end{equation}
Subtracting the projected equations for $\rho$ and $w_\sigma$ cancels
$f$. Uniform invertibility and a nonlinear difference estimate then
give $\|u-U_\sigma\|_{H^s}\lesssim\Gamma(u)$.
Consequently,
\[
  R^{-n-2s-1}\lesssim\Gamma(U_\sigma)
                  \lesssim\Gamma(u).
\]
Combining this with \eqref{eq:interaction-scales} proves
Theorem~\ref{thm-quantitative}. Corrected configurations also provide
the examples in Theorem~\ref{sharp}.

Section~\ref{sec2} establishes the interaction estimates, including the
translation projections. Section~\ref{sec3} develops the projected
linear theory and proves the stability estimate.
Section~\ref{sec4} constructs the examples proving optimality.

\subsection{Notation}
All integrals without a specified domain are over $\mathbb R^n$.
The symbols $\lesssim$ and $\gtrsim$ denote upper and lower bounds up to positive
constants depending only on the fixed parameters $n,s,\alpha,m$,
unless stated otherwise. These constants are independent of the
translation parameters; $\simeq$ means that both bounds hold.
We write $C$ and $c$ for positive constants whose values may change
between estimates, and use $\log$ for the natural logarithm.
The notation $o_R(1)$ denotes a quantity
tending to zero as the minimum separation $R$ tends to infinity,
uniformly over the configurations under consideration. Duality between
$H^{-s}$ and $H^s$ is denoted by $\langle\cdot,\cdot\rangle$; the
subscript $H^s$ specifies the energy inner product.
We write $B_r(z)$ for the open ball of radius $r$ centered at $z$,
with $B_r=B_r(0)$, and $e_1$ for the first coordinate unit vector.
The positive and negative parts are $v^\pm=\max\{\pm v,0\}$.

\section{Interaction estimates}\label{sec2}
We begin with the interaction error $f$ defined in
\eqref{eq:interaction-error}. The estimates below are uniform over all
configurations with a prescribed minimum separation. Unless stated
otherwise, their constants may depend on $m,n,s$, and $\alpha$.

We first record the derivative estimates needed below. They follow from
the eigenfunction asymptotics in \cite[Lemma C.2]{FLS16}, after a change
of dimension; the argument also covers $n=1$.

\begin{lemma}\label{lem:ground-state-gradient}
There exists $c_Q>0$ such that
\begin{equation}\label{eq:ground-state-gradient}
    |\nabla Q(x)|\lesssim(1+|x|)^{-n-2s-1},~~
    \nabla Q(x)
    =-c_Q(n+2s)\frac{x}{|x|^{n+2s+2}}
       +o(|x|^{-n-2s-1})
\end{equation}
as $|x|\to\infty$. Moreover,
$Q(x)=c_Q|x|^{-n-2s}+o(|x|^{-n-2s})$.
\end{lemma}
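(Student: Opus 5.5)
We derive both statements from the integral representation of $Q$ through the Bessel kernel. Let $G_s$ denote the kernel of $L^{-1}=((-\Delta)^s+1)^{-1}$. By \eqref{eq:ground-state}, $Q=G_s*Q^{1+\alpha}$. The function $G_s$ is radial, positive and smooth away from the origin. Its singularity at the origin is integrable: $G_s(x)\simeq|x|^{2s-n}$ for $n>2s$, and it is milder when $n\le 2s$. At infinity it satisfies
\[
  G_s(x)=c_{n,s}|x|^{-n-2s}+o(|x|^{-n-2s}),\qquad
  \nabla G_s(x)=-c_{n,s}(n+2s)\frac{x}{|x|^{n+2s+2}}+o(|x|^{-n-2s-1}).
\]
These asymptotics come from the subordination formula $G_s=\int_0^\infty e^{-t}\,p_t^{(s)}\,dt$, where $p_t^{(s)}$ is the fractional heat kernel, together with its expansion $p_t^{(s)}(x)\sim c\,t|x|^{-n-2s}$. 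Equivalently, one can argue as in \cite[Lemma C.2]{FLS16}. There, the derivative $\partial_r Q$ is treated as a function in dimension $n+2$ via $\partial_{x_j}Q=x_j\,(r^{-1}\partial_rQ)$, and the radial function $r^{-1}\partial_r Q$ solves a related equation with kernel $G_s^{(n+2)}$. This is the ``change of dimension'' mentioned before the lemma. For $n=1$ the same lifting to dimension $3$ works without change.

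\textbf{Step 1 (decay of $Q$).} By \eqref{eq:ground-state-decay}, $Q^{1+\alpha}\lesssim(1+|x|)^{-(1+\alpha)(n+2s)}$. Since $(1+\alpha)(n+2s)>n+2s$, this function is integrable and decays faster than $|x|^{-n-2s}$. Split $G_s*Q^{1+\alpha}(x)$ into the regions $|z|<|x|/2$, $|x-z|<|x|/2$, and the remainder. On the first region, $G_s(x-z)=c_{n,s}|x|^{-n-2s}(1+o(1))$ uniformly, and dominated convergence gives $c_{n,s}\|Q^{1+\alpha}\|_{L^1}|x|^{-n-2s}$. On the second region we use the local integrability of $G_s$: the contribution is at most $\sup_{|z-x|<|x|/2}Q^{1+\alpha}(z)\cdot\|G_s\|_{L^1}=o(|x|^{-n-2s})$. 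The third region is bounded by $|x|^{-n-2s}\int_{|z|>|x|/2}Q^{1+\alpha}=o(|x|^{-n-2s})$. Hence $Q=c_Q|x|^{-n-2s}+o(|x|^{-n-2s})$ with $c_Q=c_{n,s}\int Q^{1+\alpha}>0$.

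\textbf{Step 2 (gradient).} We write $\nabla Q=\nabla G_s*Q^{1+\alpha}$. This convolution is legitimate: near the origin $\nabla G_s\simeq|x|^{2s-n-1}$, which is integrable when $s>1/2$. In general it is safer to place the derivative on the density, writing $\nabla Q=G_s*\nabla(Q^{1+\alpha})=G_s*\bigl((1+\alpha)Q^\alpha\nabla Q\bigr)$. This requires an a priori bound $|\nabla Q|\lesssim(1+|x|)^{-n-2s}$, which follows from $Q\in C^{1}$ by regularity theory together with the same three-region argument applied to $\nabla G_s$ away from the origin.

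To obtain the sharp rate we repeat the three-region split, now putting the derivative on $G_s$ in the far regions. After a smooth cutoff $\chi(|x-z|/|x|)$, the derivative falls on $G_s$ where $|x-z|\gtrsim|x|$, and on $Q^{1+\alpha}$ where $|x-z|\lesssim|x|$.
\begin{itemize}
\item The near-origin region gives $\nabla G_s(x)\int Q^{1+\alpha}+o(|x|^{-n-2s-1})$. Here we expand $\nabla G_s(x-z)-\nabla G_s(x)=O(|z||x|^{-n-2s-2})$ and use $\int|z|Q^{1+\alpha}<\infty$, which holds since $(1+\alpha)(n+2s)>n+1$ whenever $\alpha(n+2s)+2s>1$. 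When this fails, we cut at $|z|<\delta|x|$ instead and obtain $o(1)$ factors.
\item The region near $x$ gives at most $\|G_s\|_{L^1}\sup|\nabla(Q^{1+\alpha})|\lesssim|x|^{-(1+\alpha)(n+2s)}$, using the a priori gradient bound. This is $o(|x|^{-n-2s-1})$ provided $\alpha(n+2s)>1$.
\end{itemize}

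\textbf{Main obstacle.} The difficulty is small $\alpha$, where $\alpha(n+2s)\le 1$ and the crude bounds above are not enough. There we bootstrap. Each iteration of $\nabla Q=G_s*((1+\alpha)Q^\alpha\nabla Q)$ improves the decay of $\nabla Q$ by the factor $|x|^{-\alpha(n+2s)}$, and the improvement stops at the rate $|x|^{-n-2s-1}$ dictated by $\nabla G_s$. After finitely many iterations we obtain $|\nabla Q|\lesssim(1+|x|)^{-n-2s-1}$, and the leading term then follows from the near-origin region. Consistency of the constants follows from $\nabla G_s\sim-(n+2s)c_{n,s}x|x|^{-n-2s-2}$. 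This is exactly the step that \cite[Lemma C.2]{FLS16} handles via the dimension-lifting trick, so we would quote it for the asymptotics and only verify that the hypotheses hold for $r^{-1}\partial_rQ$.
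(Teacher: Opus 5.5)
Your argument is sound, but it takes a different route from the paper.

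The paper never analyzes the resolvent kernel. It sets $\psi=-Q'(r)/r$ and checks that $\psi$ is a smooth, nonnegative, nonzero radial function in $L^2(\mathbb R^{n+2})$. Using the unitary map $a\mapsto x_ja(|x|)$ from radial functions on $\mathbb R^{n+2}$ to the degree-one sector on $\mathbb R^n$, together with spectral calculus, it obtains $((-\Delta_{\mathbb R^{n+2}})^s+1)\psi=(1+\alpha)Q^\alpha\psi$. It then quotes \cite[Lemma C.2]{FLS16} for this $L^2$ eigenfunction. That gives the bound and the asymptotics of $\psi$ at once, with $c_Q>0$ because the coefficient is a positive multiple of $\int Q^\alpha\psi$. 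The asymptotics of $Q$ follow from $Q(r)=\int_r^\infty t\psi(t)\,dt$. So the paper needs regularity of $Q$ and $Q\to0$ at infinity, but not the two-sided decay.

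You instead work with $Q=G_s*Q^{1+\alpha}$. Step 1 gives the asymptotics of $Q$ with the explicit constant $c_Q=c_{n,s}\int Q^{1+\alpha}$. Step 2 is a self-contained bootstrap that avoids the eigenfunction lemma. The price is different external input: the decay \eqref{eq:ground-state-decay}, global boundedness of $\nabla Q$, and the gradient asymptotics of $G_s$. The last of these is most easily obtained from the same dimension shift applied to the kernel, $\nabla G_s^{(n)}(x)=-c\,x\,G_s^{(n+2)}(x)$ with $c>0$. Your closing fallback to \cite{FLS16} is exactly the paper's proof, but you would then owe the verifications listed above. Since your bootstrap closes, you do not need it.

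One step should be reordered. If you start only from $\nabla Q\in L^\infty$, a single pass of the three-region argument does not give $|\nabla Q|\lesssim(1+|x|)^{-n-2s}$; the piece near $x$ is only $O(|x|^{-\alpha(n+2s)})$. Take a cutoff $\chi(|w|/\lambda)$ with $\chi=1$ on $[0,1/2]$ and $\chi=0$ on $[3/4,\infty)$, frozen at the evaluation point $|x_0|=\lambda$. The term where the derivative hits the cutoff lives on $|z|\ge\lambda/4$ and is harmless. The splitting then gives
\[
  M(\lambda)\le C\lambda^{-n-2s-1}+C\lambda^{-\alpha(n+2s)}M(\lambda/4),
  \qquad M(\lambda):=\sup_{|x|\ge\lambda}|\nabla Q(x)|.
\]
Iterating from $M\lesssim1$ yields $|\nabla Q|\lesssim(1+|x|)^{-n-2s-1}$ after finitely many steps, for every $\alpha>0$. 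So neither the intermediate a priori bound nor the case split at $\alpha(n+2s)=1$ is needed. The piece near $x$ is then $O(\lambda^{-\alpha(n+2s)-n-2s-1})$. The $|z|<\delta\lambda$ version of your leading-term computation uses only the leading asymptotics of $\nabla G_s$, not a bound on $\nabla^2G_s$, and works for all $\alpha$.
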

\begin{proof}
The regularity theory in \cite{FL13,FLS16} gives
$Q\in H^{2s+1}(\mathbb R^n)\cap C^\infty(\mathbb R^n)$.
Set $\psi(r)=-Q'(r)/r$ and regard this profile as a radial function on
$\mathbb R^{n+2}$. It extends smoothly across the origin, is nonnegative
and nonzero, and belongs to $L^2(\mathbb R^{n+2})$, since
\[
    \int_0^\infty|\psi(r)|^2r^{n+1}\,dr
    =\int_0^\infty|Q'(r)|^2r^{n-1}\,dr<\infty.
\]
For a smooth radial profile $a$, direct differentiation gives
\[
    -\Delta_{\mathbb R^n}\bigl(x_j a(|x|)\bigr)
    =x_j\left(-a''(|x|)-\frac{n+1}{|x|}a'(|x|)\right).
\]
Up to a constant normalization, $a\mapsto x_j a(|x|)$ is a unitary
map from the radial subspace of $L^2(\mathbb R^{n+2})$ onto the
corresponding degree-one angular subspace of $L^2(\mathbb R^n)$
(the odd subspace when $n=1$). The displayed identity intertwines
the self-adjoint Laplacians on these subspaces, so spectral calculus
gives the same identity for their fractional powers.
Differentiating the equation for $Q$ and using
$\partial_{x_j}Q=-x_j\psi(|x|)$ therefore yields
\[
    \bigl(( -\Delta_{\mathbb R^{n+2}})^s+1\bigr)\psi
    =(1+\alpha)Q(|x|)^\alpha\psi
    \quad\text{in }\mathbb R^{n+2}.
\]
Here $Q(|x|)$ denotes the original radial profile evaluated at the
radius in $\mathbb R^{n+2}$.
Thus $\psi$ is an $L^2$ eigenfunction with eigenvalue $-1$ for the
potential $-(1+\alpha)Q^\alpha$, which is bounded and vanishes at
infinity. By \cite[Lemma C.2]{FLS16},
\[
    \psi(r)\lesssim(1+r)^{-n-2s-2},\qquad
    \psi(r)=c_Q(n+2s)r^{-n-2s-2}+o(r^{-n-2s-2}).
\]
Here $c_Q>0$, since the asymptotic coefficient is a positive
multiple of $\int_{\mathbb R^{n+2}}Q^\alpha\psi\,dx>0$.
Finally, $Q'(r)=-r\psi(r)$ and
$Q(r)=\int_r^\infty t\psi(t)\,dt$ prove the assertions.
\end{proof}

\begin{lemma}\label{norm}
For every finite configuration with $m\ge2$, the interaction error
belongs to $H^1(\mathbb R^n)$ and satisfies
\begin{equation}\label{eq:interaction-norm-equivalence}
    C^{-1}\|f\|_{L^2}
    \le \|f\|_{H^{-s}}
    \le \|f\|_{L^2}
    \le \|f\|_{H^s}
    \le C\|f\|_{L^2},
\end{equation}
where $C=C(n,s,\alpha)$ is independent of both $m$ and the centers.
\end{lemma}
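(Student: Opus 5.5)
The plan is to reduce everything to a pointwise gradient bound $|\nabla f|\le C_1 f$ with $C_1=C_1(n,s,\alpha)$, and then get all the inequalities from interpolation and duality. The two middle inequalities in \eqref{eq:interaction-norm-equivalence} need no work. For $v\in H^s$,
\[
|\langle f,v\rangle|\le\|f\|_{L^2}\|v\|_{L^2}\le\|f\|_{L^2}\|v\|_{H^s},
\]
which gives $\|f\|_{H^{-s}}\le\|f\|_{L^2}$. And $\|f\|_{L^2}\le\|f\|_{H^s}$ holds directly by \eqref{eq:energy-norm}.

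\emph{Key algebraic identity.} Since $\sigma^{1+\alpha}=\sum_i Q_i\sigma^\alpha$, we can write
\[
f=\sum_{i=1}^m Q_i\bigl(\sigma^\alpha-Q_i^\alpha\bigr).
\]
Each summand is nonnegative because $\sigma\ge Q_i>0$, so $f\ge0$. Now $\sigma$ is smooth and positive (Lemma~\ref{lem:ground-state-gradient} gives $Q\in C^\infty$), so $f$ is $C^1$ and
\[
\nabla f=(1+\alpha)\Bigl(\sigma^\alpha\nabla\sigma-\sum_i Q_i^\alpha\nabla Q_i\Bigr)
=(1+\alpha)\sum_{i=1}^m\bigl(\sigma^\alpha-Q_i^\alpha\bigr)\nabla Q_i .
\]
Lemma~\ref{lem:ground-state-gradient} together with \eqref{eq:ground-state-decay} gives $|\nabla Q|\le C_0 Q$ on $\mathbb R^n$. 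The bound is $(1+|x|)^{-n-2s-1}\lesssim(1+|x|)^{-n-2s}\lesssim Q$, and $C_0$ depends only on $n,s,\alpha$. Since $\sigma^\alpha-Q_i^\alpha\ge0$, we obtain
\[
|\nabla f|\le(1+\alpha)C_0\sum_i(\sigma^\alpha-Q_i^\alpha)Q_i=(1+\alpha)C_0 f .
\]
The constant does not depend on $m$ or on the centers. The step I would check most carefully is exactly this one: the nonnegativity of the weights $\sigma^\alpha-Q_i^\alpha$, which lets the bound pass through the sum without losing a factor of $m$.

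\emph{Membership in $H^1$.} We have $0\le f\le\sigma^{1+\alpha}\le m^{1+\alpha}\max_i Q_i^{1+\alpha}$ pointwise. Since $Q^{1+\alpha}\in L^2$ by \eqref{eq:ground-state-decay}, this gives $f\in L^2$, and then $|\nabla f|\lesssim f$ gives $\nabla f\in L^2$. Hence $f\in H^1$. The $m$-dependence here affects only finiteness, not the constants.

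\emph{The outer inequalities.} By Plancherel and H\"older,
\[
\|(-\Delta)^{s/2}f\|_{L^2}^2=\int|\xi|^{2s}|\hat f|^2\,d\xi\le\|f\|_{L^2}^{2(1-s)}\|\nabla f\|_{L^2}^{2s}\le\bigl((1+\alpha)C_0\bigr)^{2s}\|f\|_{L^2}^2 .
\]
This yields $\|f\|_{H^s}\le C\|f\|_{L^2}$. Finally, by duality,
\[
\|f\|_{L^2}^2=\langle f,f\rangle\le\|f\|_{H^{-s}}\|f\|_{H^s}\le C\|f\|_{H^{-s}}\|f\|_{L^2}.
\]
Dividing by $\|f\|_{L^2}$ gives the first inequality; the case $f=0$ is trivial.
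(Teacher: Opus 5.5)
Your proposal is correct and follows the paper's proof essentially step for step. You prove the pointwise bound $|\nabla f|\le Cf$ from $|\nabla Q|\lesssim Q$ and the nonnegativity of $\sigma^\alpha-Q_i^\alpha$, then use the Plancherel--H\"older interpolation for $\|f\|_{H^s}\lesssim\|f\|_{L^2}$, and close with the duality step $\|f\|_{L^2}^2\le\|f\|_{H^{-s}}\|f\|_{H^s}$; your explicit identity $f=\sum_iQ_i(\sigma^\alpha-Q_i^\alpha)$ is the one the paper uses implicitly in the final equality of its gradient estimate.
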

\begin{proof}
Since $0\le f\le \sigma^{1+\alpha}\le
\|\sigma\|_{L^\infty}^{\alpha}\sigma$ and
$\sigma\in L^2\cap L^\infty$, we have $f\in L^2$.
Differentiating $f$ and using $|\nabla Q|\le CQ$ gives
\[
    |\nabla f|
    \le(1+\alpha)\sum_{j=1}^m
        (\sigma^\alpha-Q_j^\alpha)|\nabla Q_j|
    \le C\sum_{j=1}^m(\sigma^\alpha-Q_j^\alpha)Q_j=Cf.
\]
Thus $f\in H^1$, with
\begin{equation}\label{eq:interaction-gradient-bound}
    \|\nabla f\|_{L^2}\le C\|f\|_{L^2}.
\end{equation}
For the energy norm used throughout the paper, Plancherel's theorem
and H\"older's inequality give
\[
    \|f\|_{H^s}^2
    =\|f\|_{L^2}^2+
      \int_{\mathbb R^n}|\xi|^{2s}|\widehat f(\xi)|^2\,d\xi
    \le\|f\|_{L^2}^2+
       \|f\|_{L^2}^{2(1-s)}\|\nabla f\|_{L^2}^{2s}.
\]
Consequently,
\begin{equation}\label{eq:interaction-Hs-bound}
    \|f\|_{H^s}
    \le C\|f\|_{L^2}.
\end{equation}
The inequalities $\|f\|_{H^{-s}}\le\|f\|_{L^2}\le\|f\|_{H^s}$
follow directly from the definitions. Finally, duality yields
\[
    \|f\|_{L^2}^2
    \le\|f\|_{H^{-s}}\|f\|_{H^s}
    \le C\|f\|_{H^{-s}}\|f\|_{L^2},
\]
which proves the remaining inequality.
\end{proof}

For $a,b\ge0$, write
\[
    D_\alpha(a,b):=(a+b)^{1+\alpha}-a^{1+\alpha}-b^{1+\alpha}.
\]
This scalar interaction will also be used below.

\begin{lemma}\label{decoupling}
For $\alpha>0$ and $m\ge2$,
\[
    0\le f\le
    \max\{1,(m-1)^{\alpha-1}\}
    \sum_{1\le i<j\le m}
    D_\alpha(Q_i,Q_j).
\]
\end{lemma}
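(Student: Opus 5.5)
This is a pointwise inequality about nonnegative numbers $a_1,\dots,a_m$ (here $a_i=Q_i(x)$), so I will prove the scalar statement
\[
  0\le F(a):=\Bigl(\sum_i a_i\Bigr)^{1+\alpha}-\sum_i a_i^{1+\alpha}
  \le \max\{1,(m-1)^{\alpha-1}\}\sum_{i<j}D_\alpha(a_i,a_j).
\]

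\textbf{Lower bound.} Nonnegativity follows from superadditivity of $t\mapsto t^{1+\alpha}$ on $[0,\infty)$, which holds because the function is convex and vanishes at $0$.

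\textbf{Upper bound, general identity.} Write $S=\sum_i a_i$ and $S_{-i}=S-a_i$. Then
\[
  F=\sum_i a_i\bigl(S^\alpha-a_i^\alpha\bigr),
\]
and likewise $D_\alpha(a,b)=a\bigl((a+b)^\alpha-a^\alpha\bigr)+b\bigl((a+b)^\alpha-b^\alpha\bigr)$. Hence
\[
  \sum_{i<j}D_\alpha(a_i,a_j)=\sum_i a_i\sum_{j\ne i}\bigl((a_i+a_j)^\alpha-a_i^\alpha\bigr).
\]
It therefore suffices to prove, for each fixed $i$ with $a=a_i$ and $b_j=a_j$ for $j\ne i$,
\[
  (a+\textstyle\sum_j b_j)^\alpha-a^\alpha\le K\sum_j\bigl((a+b_j)^\alpha-a^\alpha\bigr),
\]
where $K=\max\{1,(m-1)^{\alpha-1}\}$ and there are $k=m-1$ terms $b_j$.

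\textbf{Case $\alpha\le1$.} Here $g(t)=(a+t)^\alpha-a^\alpha$ is concave with $g(0)=0$, hence subadditive. This gives $g(\sum b_j)\le\sum g(b_j)$, so $K=1$ works.

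\textbf{Case $\alpha>1$.} Now $g$ is convex, and subadditivity fails. I would instead use convexity in the form
\[
  g\Bigl(\sum b_j\Bigr)=g\Bigl(k\cdot\tfrac1k\textstyle\sum b_j\Bigr).
\]
Homogeneity-type scaling gives
\[
  (a+kt)^\alpha-a^\alpha\le k^\alpha\bigl((a+t)^\alpha-a^\alpha\bigr)\quad\text{for }t\ge0,
\]
since $(a+kt)^\alpha\le(ka+kt)^\alpha$ and $k^\alpha a^\alpha-a^\alpha\ge0$; indeed $(a+kt)^\alpha-a^\alpha\le k^\alpha(a+t)^\alpha-k^\alpha a^\alpha+(k^\alpha-1)a^\alpha$. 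That naive bound has the wrong sign, so I would argue more carefully. The map $t\mapsto\bigl((a+kt)^\alpha-a^\alpha\bigr)/\bigl((a+t)^\alpha-a^\alpha\bigr)$ is controlled by comparing derivatives,
\[
  k(a+kt)^{\alpha-1}\le k\cdot k^{\alpha-1}(a+t)^{\alpha-1},
\]
and integrating from $0$ gives $g(kt)\le k^\alpha g(t)$. Then Jensen yields
\[
  g\Bigl(\tfrac1k\textstyle\sum b_j\Bigr)\le\tfrac1k\sum g(b_j),
\]
so
\[
  g\Bigl(\sum b_j\Bigr)=g\Bigl(k\cdot\tfrac1k\textstyle\sum b_j\Bigr)\le k^\alpha\cdot\tfrac1k\sum g(b_j)=k^{\alpha-1}\sum g(b_j),
\]
with $k=m-1$, matching the stated constant.

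\textbf{Conclusion and main obstacle.} Summing over $i$ with weights $a_i$ gives the claim pointwise, and substituting $a_i=Q_i(x)$ proves the lemma. The only delicate step is the $\alpha>1$ case, where the constant $(m-1)^{\alpha-1}$ must come out exactly. The combination of the derivative comparison (to get $g(kt)\le k^\alpha g(t)$) with Jensen is designed to produce it.
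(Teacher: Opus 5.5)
Your proof is correct. Its core estimate is the same as the paper's: for $g(t)=(y+t)^\alpha-y^\alpha$ one has $g(\sum_j t_j)\le\max\{1,N^{\alpha-1}\}\sum_j g(t_j)$, from concavity when $\alpha\le1$, and from convexity, Jensen, and $g(\lambda t)\le\lambda^\alpha g(t)$ when $\alpha>1$. Your derivative comparison amounts to the paper's formula $g_y(\lambda t)=\alpha\lambda\int_0^t(y+\lambda r)^{\alpha-1}\,dr$.

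You differ from the paper in how $f$ is reduced to this estimate. The paper orders the profiles and telescopes, $f=\sum_{k=2}^m D_\alpha(\sum_{i<k}Q_i,Q_k)$. It then turns the increment inequality into the two-block bound $D_\alpha(\sum_i a_i,b)\le\max\{1,N^{\alpha-1}\}\sum_i D_\alpha(a_i,b)$ by integrating over the base point, using $D_\alpha(a,b)=(1+\alpha)\int_0^b g_y(a)\,dy$. You instead use the exact identities $F=\sum_i a_i(S^\alpha-a_i^\alpha)$ and $\sum_{i<j}D_\alpha(a_i,a_j)=\sum_i a_i\sum_{j\ne i}((a_i+a_j)^\alpha-a_i^\alpha)$. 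So you apply the increment inequality once per index, at base point $y=a_i$ with exactly $m-1$ increments, and then weight by $a_i\ge0$.

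Your route is symmetric in the indices and needs neither the ordering nor the integral representation. The paper's route isolates a two-block inequality (a sum of profiles against a single profile), at the cost of that extra bookkeeping. Both give the same constant $\max\{1,(m-1)^{\alpha-1}\}$.

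In a write-up, delete the ``naive bound'' sentence in the $\alpha>1$ case. As written, it asserts the scaling inequality with an invalid justification before you correct it.
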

\begin{proof}
We claim that, for $a_1,\ldots,a_N,b\ge0$,
\begin{equation}\label{eq:interaction-sum-bound}
    D_\alpha\left(\sum_{i=1}^N a_i,b\right)
    \le\max\{1,N^{\alpha-1}\}\sum_{i=1}^N D_\alpha(a_i,b).
\end{equation}
For fixed $y\ge0$, let $g_y(t)=(t+y)^\alpha-y^\alpha$.
If $0<\alpha\le1$, concavity and $g_y(0)=0$ imply
$g_y(a_i)\ge \dfrac{a_i}{\sum_j a_j}g_y(\sum_j a_j)$
when $\sum_j a_j>0$.
Summation gives
\begin{equation}\label{eq:concave-increment-bound}
    g_y\left(\sum_{i=1}^N a_i\right)\le\sum_{i=1}^N g_y(a_i);
\end{equation}
The case $\sum_j a_j=0$ is immediate. If $\alpha>1$, then $g_y$ is convex and,
for $\lambda\ge1$,
\[
    g_y(\lambda t)
    =\alpha\lambda\int_0^t(y+\lambda r)^{\alpha-1}\,dr
    \le\lambda^\alpha g_y(t).
\]
Jensen's inequality therefore gives
\begin{equation}\label{eq:convex-increment-bound}
    g_y\left(\sum_{i=1}^N a_i\right)
    \le\frac1N\sum_{i=1}^N g_y(Na_i)
    \le N^{\alpha-1}\sum_{i=1}^N g_y(a_i).
\end{equation}
Since $D_\alpha(a,b)=(1+\alpha)\int_0^b g_y(a)\,dy$,
integrating these inequalities proves \eqref{eq:interaction-sum-bound}.
The telescoping identity
\[
    f=\sum_{k=2}^mD_\alpha\left(\sum_{i=1}^{k-1}Q_i,Q_k\right)
\]
and \eqref{eq:interaction-sum-bound} give the assertion.
\end{proof}

\begin{lemma}\label{l2}
For $|y|>1$,
\begin{align*}
    &\int_{\{Q(x)>Q(x+y)\}}Q(x)^{2\alpha}Q(x+y)^2\,dx\\
    &\qquad\simeq
    \begin{cases}
        |y|^{-2(n+2s)},
        &\displaystyle \alpha>\alpha_*,\\[3pt]
        |y|^{-2(n+2s)}\log(2+|y|),
        &\displaystyle \alpha=\alpha_*,\\[3pt]
        |y|^{n-2(1+\alpha)(n+2s)},
        &\displaystyle 0<\alpha<\alpha_*.
    \end{cases}
\end{align*}
\end{lemma}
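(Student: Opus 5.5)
We will use only the two-sided bound $Q(x)\simeq(1+|x|)^{-n-2s}$ from \eqref{eq:ground-state-decay}; radiality is not needed beyond this. Write $Y=|y|>1$ and $\beta:=n+2s$, and let $A:=\{Q(x)>Q(x+y)\}$. The integrand is $Q(x)^{2\alpha}Q(x+y)^2\simeq(1+|x|)^{-2\alpha\beta}(1+|x+y|)^{-2\beta}$. Since $Q$ is radial and decreasing (or, by the two-sided bound, up to constants), $A$ is comparable to the half-space region where $x$ is closer to the origin than to $-y$. Precisely, $A$ contains $\{|x|<|x+y|\}$ and is contained in $\{1+|x|\le C(1+|x+y|)\}$. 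So for the upper bound we integrate over $\{1+|x|\le C(1+|x+y|)\}$. For the lower bound we integrate over explicit subsets of $\{|x|<|x+y|\}$; if radial monotonicity of $Q$ is not at hand, we use instead the subsets $\{|x|\le c|x+y|\}$ with small $c$, which lie in $A$ by the two-sided bounds.

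For the upper bound, split the domain into three pieces.
\begin{enumerate}
\item[(i)] On $B_{Y/2}$, where $|x+y|\ge Y/2$, the contribution is $\lesssim Y^{-2\beta}\int_{B_{Y/2}}(1+|x|)^{-2\alpha\beta}dx$. This radial integral is $\simeq 1$, $\log(2+Y)$, or $Y^{n-2\alpha\beta}$ according as $2\alpha\beta>n$, $=n$, or $<n$, that is, according as $\alpha>\alpha_*$, $\alpha=\alpha_*$, or $\alpha<\alpha_*$. This produces exactly the three stated rates.
\item[(ii)] On $\{|x|\ge Y/2\}\cap\{|x+y|\ge Y/4\}$ the integrand is at most $(1+|x|)^{-2\alpha\beta}(1+|x+y|)^{-2\beta}$. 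On the subregion $|x|\ge 2Y$ both factors are $\simeq$ powers of $|x|$, so the contribution is $\int_{|x|>2Y}|x|^{-2(1+\alpha)\beta}\simeq Y^{n-2(1+\alpha)\beta}$, which is dominated by every case of (i) since $2\alpha\beta>0$. On the annular part $Y/2\le|x|\le 2Y$ we have $(1+|x|)^{-2\alpha\beta}\simeq Y^{-2\alpha\beta}$, and $\int_{|x+y|\ge Y/4}(1+|x+y|)^{-2\beta}\lesssim Y^{n-2\beta}$ because $2\beta>n$. The contribution is again $Y^{n-2(1+\alpha)\beta}$.
\item[(iii)] On $A\cap\{|x+y|<Y/4\}$ we have $|x|\ge 3Y/4$, while membership in $A$ forces $1+|x|\lesssim 1+|x+y|\le Y/4+1$. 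This is impossible for large $Y$ once the constants are tracked; for bounded $Y$ everything is $\simeq1$ anyway. This is precisely where the restriction to $A$ matters: it excludes the neighborhood of the other center, where the integrand would be $\simeq Y^{-2\alpha\beta}$, which is too large.
\end{enumerate}
Finally, observe that $Y^{n-2(1+\alpha)\beta}\le Y^{-2\beta}$ exactly when $\alpha\ge\alpha_*$.

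For the lower bound, restrict to explicit regions inside $A$.
\begin{enumerate}
\item[(a)] For $\alpha\ge\alpha_*$, integrate over the region $\{1\le|x|\le cY\}$ with $c$ small. It lies in $A$, and there $|x+y|\simeq Y$, giving $\gtrsim Y^{-2\beta}\int_1^{cY}r^{n-1-2\alpha\beta}dr$. This integral is $\gtrsim 1$ when $\alpha>\alpha_*$ and $\gtrsim\log Y$ when $\alpha=\alpha_*$.
\item[(b)] For $\alpha<\alpha_*$, integrate over the annulus $\{Y/8\le|x|\le Y/4\}$, which again lies in $A$ with $|x+y|\simeq Y$. It has volume $\simeq Y^n$, and the integrand there is $\simeq Y^{-2\alpha\beta-2\beta}$, giving $Y^{n-2(1+\alpha)\beta}$.
\end{enumerate}
For $1<Y\le Y_0$, all quantities are comparable to constants by continuity and positivity. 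The log term is written as $\log(2+|y|)$ to cover this range.

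The main obstacle is step (iii), and more generally the clean description of $A$ using only the two-sided comparability $Q\simeq(1+|x|)^{-\beta}$ rather than exact monotonicity. We will first use that $Q$ is radially decreasing, which is standard for ground states of this equation (see \cite{FLS16}), so that $A=\{|x|<|x+y|\}$ exactly. If that is unavailable, we fall back on the comparability sets as described above.
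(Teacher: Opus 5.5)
Your argument is correct and follows essentially the paper's route: strict radial monotonicity of $Q$ identifies the region as the half-space $\{|x|<|x+y|\}$, and splitting at $B_{|y|/2}$ with the two-sided decay gives the radial trichotomy at $2\alpha(n+2s)=n$ together with an exterior contribution of order $|y|^{n-2(1+\alpha)(n+2s)}$. The paper treats the whole exterior in one step using $|x|\le|x+y|\le3|x|$ there, which gives the upper and lower bounds at once. The only inaccuracy is in your non-monotone fallback: with only comparability constants the set in step (iii) need not be empty, but it lies where $|x|\simeq|x+y|\simeq|y|$, so it contributes $O(|y|^{n-2(1+\alpha)(n+2s)})$ and the conclusion is unaffected.
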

\begin{proof}
By rotation invariance, take $y=Re_1$, where $R=|y|$.
Strict radial monotonicity of $Q$ identifies the integration region as
$D_R=\{x_1>-R/2\}$. On $B_{R/2}\subset D_R$, we have
$R/2\le|x+Re_1|\le3R/2$. On $D_R\setminus B_{R/2}$,
\[
    |x|\le|x+Re_1|\le|x|+R\le3|x|.
\]
Moreover, $D_R\setminus B_{R/2}$ contains the half-space portion
$\{x_1\ge0,\ |x|>R/2\}$. The two-sided decay estimate for $Q$
therefore gives
\begin{align*}
    \int_{D_R}Q(x)^{2\alpha}Q(x+Re_1)^2\,dx
    &\simeq
    R^{-2(n+2s)}\int_0^{R/2}
       \frac{r^{n-1}}{(1+r)^{2\alpha(n+2s)}}\,dr\\
    &\quad+\int_{R/2}^\infty
       r^{n-1-2(1+\alpha)(n+2s)}\,dr.
\end{align*}
The second integral is comparable to
$R^{n-2(1+\alpha)(n+2s)}$. The first radial integral is comparable to
$1$, $\log(2+R)$, or $R^{n-2\alpha(n+2s)}$, according as
$2\alpha(n+2s)$ is greater than, equal to, or less than $n$.
These three cases prove the lemma.
\end{proof}

\begin{lemma}\label{eqf}
For $R\ge2$, the interaction error satisfies
\[
    \|f\|_{H^{-s}}\simeq
    \begin{cases}
        R^{-(n+2s)},
        &\displaystyle \alpha>\alpha_*,\\[3pt]
        R^{-(n+2s)}(\log R)^{\frac{1}{2}},
        &\displaystyle \alpha=\alpha_*,\\[3pt]
        R^{\frac{n}{2}-(1+\alpha)(n+2s)},
        &\displaystyle 0<\alpha<\alpha_*.
    \end{cases}
\]
\end{lemma}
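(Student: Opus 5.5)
By Lemma~\ref{norm}, it suffices to prove the same two-sided bounds for $\|f\|_{L^2}$. The plan is to squeeze $\|f\|_{L^2}^2$ between pairwise integrals that Lemma~\ref{l2} evaluates. Throughout, set $\theta(y):=\int_{\{Q(x)>Q(x+y)\}}Q(x)^{2\alpha}Q(x+y)^2\,dx$. The three cases of Lemma~\ref{l2} are exactly the squares of the three rates in the statement.

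\emph{Scalar bounds.} First I establish the two-sided estimate
\[
D_\alpha(a,b)\simeq \max\{a,b\}^{\alpha}\min\{a,b\}\qquad(a,b\ge0).
\]
By symmetry and homogeneity it suffices to take $a=1\ge b=t\in[0,1]$. Then $D_\alpha(1,t)=(1+t)^{1+\alpha}-1-t^{1+\alpha}$ is continuous and positive on $(0,1]$, and it behaves like $(1+\alpha)t$ as $t\to0$. This gives $D_\alpha(1,t)\simeq t$.

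\emph{Upper bound.} By Lemma~\ref{decoupling},
\[
\|f\|_{L^2}\lesssim\sum_{i<j}\|D_\alpha(Q_i,Q_j)\|_{L^2}.
\]
By the scalar bound, $D_\alpha(Q_i,Q_j)^2\lesssim \max^{2\alpha}\min^2$. After translating, the integral of this over the region $\{Q_i>Q_j\}$ is $\theta(y_j-y_i)$, and the complementary region is handled symmetrically. Lemma~\ref{l2} then bounds each pair by the rate at separation $|y_i-y_j|\ge R$. Each rate is a decreasing function of $|y|$ for large $|y|$: in the threshold case, $|y|^{-2(n+2s)}\log(2+|y|)$ is eventually decreasing, and in the case $\alpha<\alpha_*$ the exponent $n-2(1+\alpha)(n+2s)<0$. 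So every pair is dominated by the value at $R$, which gives the upper bound with a constant depending on $m$.

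\emph{Lower bound.} This is the main obstacle, since other profiles could interfere. Pick a pair $i\neq j$ with $|y_i-y_j|=R$. Superadditivity of $t\mapsto t^{1+\alpha}$ gives $f\ge D_\alpha(Q_i,Q_j)\ge0$ pointwise; this is the identity $f=D_\alpha(\text{partial sum},Q_k)+\dots$ with all terms nonnegative and monotone in their arguments. Hence
\[
\|f\|_{L^2}^2\ge\int D_\alpha(Q_i,Q_j)^2\gtrsim\theta(y_j-y_i)\simeq\text{rate}(R)^2,
\]
using the lower half of the scalar bound and Lemma~\ref{l2} at $|y|=R$. The condition $R\ge2>1$ makes Lemma~\ref{l2} applicable, and the factor $\log(2+R)\simeq\log R$ for $R\ge2$ gives the stated logarithm.

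The only points needing care are two. The pointwise inequality $f\ge D_\alpha(Q_i,Q_j)$ follows from $(a+b+c)^{1+\alpha}-a^{1+\alpha}-b^{1+\alpha}-c^{1+\alpha}\ge D_\alpha(a,b)$ for nonnegative arguments, by superadditivity. The second is the monotonicity of the rates in the upper bound.
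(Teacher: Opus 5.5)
Your proposal is correct and follows the paper's proof essentially step for step. It reduces to $\|f\|_{L^2}$ via Lemma~\ref{norm} and uses the scalar comparison $D_\alpha(a,b)\simeq a^\alpha b$ for $a\ge b$; the upper bound then comes from Lemma~\ref{decoupling}, Lemma~\ref{l2} and monotonicity of the rates, and the lower bound from the pointwise inequality $f\ge D_\alpha(Q_i,Q_j)$ for a closest pair. (The threshold rate $r^{-2(n+2s)}\log(2+r)$ is actually decreasing on all of $[2,\infty)$, so ``eventually decreasing'' is more caution than needed; in either case the domination by the value at $R$ holds up to constants.)
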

\begin{proof}
Lemmas \ref{norm} and \ref{decoupling} imply
\[
    \|f\|_{H^{-s}}\simeq\|f\|_{L^2}
    \lesssim\sum_{i<j}\|D_\alpha(Q_i,Q_j)\|_{L^2}.
\]
For $a\ge b\ge0$, we have $D_\alpha(a,b)\simeq a^\alpha b$.
Indeed, $D_\alpha(a,b)/(a^\alpha b)$ is a positive continuous function
of $b/a\in(0,1]$, with limit $1+\alpha$ at zero.
Hence
\[
    \|D_\alpha(Q_i,Q_j)\|_{L^2}^2
    \lesssim
    \int_{\{Q_i>Q_j\}}Q_i^{2\alpha}Q_j^2\,dx+
    \int_{\{Q_j>Q_i\}}Q_j^{2\alpha}Q_i^2\,dx.
\]
The equality set is a hyperplane and has measure zero. The reflection
$x\mapsto-y_i-y_j-x$ interchanges the two profiles, so the two
integrals are equal. After translation and rotation, Lemma \ref{l2}
applies with $|y|=|y_i-y_j|$. Each resulting bound decreases for
$|y_i-y_j|\ge R\ge2$; summing over the finitely many pairs proves the upper bound.

For the lower bound, choose $i,j$ with $|y_i-y_j|=R$.
Positivity and superadditivity give $f\ge D_\alpha(Q_i,Q_j)$.
The scalar comparison above, Lemma~\ref{l2}, and reflection symmetry
give the matching lower bound for $\|D_\alpha(Q_i,Q_j)\|_{L^2}$.
Since $\|f\|_{H^{-s}}\simeq\|f\|_{L^2}$ by Lemma~\ref{norm},
the same lower bound holds for $f$.
\end{proof}

The norm of $f$ measures the total interaction. Its projections onto
translation modes detect a smaller scale, which will determine the
stability modulus. The gradient asymptotics in
Lemma \ref{lem:ground-state-gradient} identify their leading term.

\begin{lemma}\label{lem:fQ}
For fixed $m\ge2$ and all sufficiently large $R$,
\begin{equation}\label{eq:maximal-projection}
    \max_{1\le i\le m}
    \left|\int_{\mathbb R^n}f\nabla Q_i\,dx\right|
    \simeq R^{-n-2s-1}.
\end{equation}
The constants are independent of the centers.
\end{lemma}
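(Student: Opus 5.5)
We compute the leading term of $\int f\nabla Q_i$ explicitly. The first step splits space into neighborhoods of the centers. Fix $\delta>0$ small and let $B^{(k)}:=B_{\delta R}(-y_k)$. On $B^{(i)}$ all other profiles are small. Writing $S_i:=\sum_{k\ne i}Q_k$, we expand
\[
  f=(Q_i+S_i)^{1+\alpha}-Q_i^{1+\alpha}-\sum_{k\ne i}Q_k^{1+\alpha}
  =(1+\alpha)Q_i^\alpha S_i+O\bigl(Q_i^{\alpha-1}S_i^2+S_i^{1+\alpha}\bigr)
\]
wherever $S_i\le Q_i$. Where $Q_i\lesssim S_i$, which happens only at distance $\gtrsim R$ from $-y_i$, we use the crude bound from Lemma~\ref{decoupling}.

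The leading piece gives
\[
  (1+\alpha)\int Q_i^\alpha S_i\nabla Q_i=\int S_i\nabla(Q_i^{1+\alpha}).
\]
Near $-y_i$, Taylor expansion gives $S_i(x)=S_i(-y_i)+\nabla S_i(-y_i)\cdot(x+y_i)+O(R^{-n-2s-2}|x+y_i|^2)$, using the gradient bounds of Lemma~\ref{lem:ground-state-gradient} and their analogue for second derivatives. If these are unavailable, we instead use a difference-quotient argument based only on the first-derivative asymptotics. The constant term integrates to zero by radial symmetry. The linear term gives
\[
  -\Bigl(\frac1n\int Q^{1+\alpha}\Bigr)\nabla S_i(-y_i)=:-\kappa\,\nabla S_i(-y_i),\qquad \kappa>0.
\]
Here $\nabla S_i(-y_i)=\sum_{k\ne i}\nabla Q(y_k-y_i)$, which by Lemma~\ref{lem:ground-state-gradient} equals $-c_Q(n+2s)\sum_{k\ne i}\frac{y_k-y_i}{|y_k-y_i|^{n+2s+2}}$ up to $o(R^{-n-2s-1})$. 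This yields the upper bound, with the remainder still to be controlled.

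The second step, which I expect to be the main technical obstacle, shows that all remaining contributions are $o(R^{-n-2s-1})$ uniformly in the configuration. There are three sources.
\begin{itemize}
\item The Taylor remainder on $B^{(i)}$ is bounded by $R^{-n-2s-2}\int_{B_{\delta R}}|x|^2Q^{\alpha}|\nabla Q|$. Since $|\nabla Q|\lesssim(1+|x|)^{-n-2s-1}$, this is at most $R^{-n-2s-2}\cdot R^{\max(0,1-\alpha(n+2s)-2s)}\log R$, which is $o(R^{-n-2s-1})$ because $\alpha(n+2s)+2s>0$.
\item The quadratic terms $Q_i^{\alpha-1}S_i^2|\nabla Q_i|$ are $O(R^{-2(n+2s)})$ times an integrable weight, or a controlled power when $\alpha<1$.
\item The integral of $f|\nabla Q_i|$ outside $B^{(i)}$ includes the regions near the other centers. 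There $f\simeq Q_k^\alpha Q_i$ and $|\nabla Q_i|\lesssim R^{-n-2s-1}$, giving $R^{-2(n+2s)-1}\int Q^\alpha$. If $Q^\alpha\notin L^1$, we keep the truncated integral, which grows at most like $R^{n-\alpha(n+2s)}$. This is still $o(R^{-n-2s-1})$, since it amounts to $R^{-(1+\alpha)(n+2s)-1}\cdot R^{0}$-type bounds. The intermediate region is handled by the pairwise bound $D_\alpha(Q_i,Q_k)\lesssim\min(Q_i,Q_k)^{\alpha}\max\cdot$ and direct radial integration.
\end{itemize}
Careful bookkeeping of the exponents in each case (compare Lemma~\ref{l2}) is the delicate part. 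I would check that every term carries an extra factor $R^{-\epsilon}$.

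The third step is the geometric lower bound. Define the vector fields
\[
  V_i:=\sum_{k\ne i}\frac{y_k-y_i}{|y_k-y_i|^{n+2s+2}}.
\]
We must show $\max_i|V_i|\gtrsim R^{-n-2s-1}$. Choose $i,j$ realizing the diameter of the configuration, or simply a pair attaining the farthest point in a direction. A cleaner choice is to pick the center $-y_i$ extremal in a direction $e$ chosen so that the minimal pair is involved. Concretely, take a pair $(i,j)$ with $|y_i-y_j|=R$, and among all centers choose $i$ maximizing $e\cdot(-y_i)$ with $e=(y_i-y_j)/R$. Then every $y_k-y_i$ has nonnegative $e$-component, except that the order is reversed. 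This fails unless the nearest neighbour is aligned, so I would instead use the following argument.

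Choose $i$ to be a vertex of the convex hull of $\{y_k\}$, with a supporting direction $e$ such that $e\cdot(y_k-y_i)\ge0$ for all $k$. Then
\[
  e\cdot V_i\ge\sum_k\frac{e\cdot(y_k-y_i)}{|y_k-y_i|^{n+2s+2}}>0.
\]
However, this is only bounded below by $R^{-n-2s-1}$ if some neighbour at distance $\simeq R$ makes an angle bounded away from $90^\circ$ with $e$. To guarantee this, I would take $i$ to be the extreme point in the direction $e$ of a closest pair $(y_a-y_b)/R$, restricted to a cluster. Alternatively, one can argue by compactness over the rescaled configurations $\{y_k/R\}$ with bounded cluster diameter. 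Since $m$ is fixed, clusters at scale $\gg R$ decouple, and their cross-contributions are $o(R^{-n-2s-1})$. Within a bounded-scale cluster, $\max_i|V_i|R^{n+2s+1}$ is a continuous positive function on a compact set: it is positive by the convex-hull argument, which gives strict positivity for each configuration. This yields the uniform lower bound.
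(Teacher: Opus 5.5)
Your proposal has a genuine gap in the remainder estimates on the core ball. Your expansion on $B^{(i)}$ contains the error $O(S_i^{1+\alpha})$, which is concretely the term $-\sum_{k\ne i}Q_k^{1+\alpha}$. This term is not among your three remainder sources, and absolute-value bounds cannot control it. When $i$ belongs to a closest pair, this function is comparable to $R^{-(1+\alpha)(n+2s)}$ on all of $B^{(i)}$. Since $\nabla Q\in L^1$ has positive norm, $\int_{B^{(i)}}S_i^{1+\alpha}|\nabla Q_i|\,dx\simeq R^{-(1+\alpha)(n+2s)}$. This is $o(R^{-n-2s-1})$ only if $\alpha(n+2s)>1$. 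So for small exponents, which the lemma must cover, no term-by-term bookkeeping yields the extra factor $R^{-\epsilon}$ you rely on.

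The missing idea is to apply to this term the same cancellation you used for the constant part of the leading piece. The function $g:=\sum_{k\ne i}Q_k^{1+\alpha}$ varies on scale $R$, while $\nabla Q_i$ is odd about $-y_i$. Integrate by parts,
\[
\int_{B^{(i)}}g\,\nabla Q_i\,dx=\int_{\partial B^{(i)}}g\,Q_i\,\nu\,dS-\int_{B^{(i)}}Q_i\,\nabla g\,dx ,
\]
and use $|\nabla g|\lesssim R^{-(1+\alpha)(n+2s)-1}$ on $B^{(i)}$ together with $Q\in L^1$. This gives $O(R^{n-1-(2+\alpha)(n+2s)}+R^{-(1+\alpha)(n+2s)-1})=o(R^{-n-2s-1})$, which is exactly how the paper treats its term $h$. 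For the main part $D_\alpha(Q,b)$, the paper likewise moves the derivative from $Q$ onto $b$ through the primitive $\int_0^Q D_\alpha(t,b)\,dt$. That also avoids your second-order Taylor step: Lemma~\ref{lem:ground-state-gradient} gives no second-derivative bounds, so your difference-quotient fallback would actually have to be carried out.

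Three smaller remarks. First, since $\int x_k\,\partial_j(Q^{1+\alpha})\,dx=-\delta_{jk}\int Q^{1+\alpha}\,dx$, the constant is $\kappa=\int Q^{1+\alpha}\,dx$, not $\frac1n\int Q^{1+\alpha}\,dx$; only its positivity matters here.

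Second, your compactness route to the lower bound works if you run it by contradiction along a sequence of rescaled configurations:
\begin{itemize}
\item translate a closest pair to a bounded position;
\item the points that stay bounded converge to a limit configuration with separation $\ge1$ that contains this pair;
\item escaping points contribute $o(1)$ to the corresponding $V_i$;
\item an exposed vertex of the limit's convex hull gives $|V_i|>0$.
\end{itemize}
Stated as ``a continuous positive function on a compact set,'' the argument is incomplete, because that constant depends on the cluster diameter.

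Third, the paper needs no compactness at all. Take a closest pair $y_a,y_b$ and set $e=(y_b-y_a)/R$. Summing $e\cdot V_i$ over $S=\{i: y_i\cdot e<(y_a+y_b)\cdot e/2\}$ cancels all pairs inside $S$ by antisymmetry and leaves only positive cross terms. The pair $(a,b)$ alone contributes $R^{-n-2s-1}$, which gives an explicit constant.
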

\begin{proof}
We first prove the expansion
\begin{equation}\label{eq:projection-expansion}
    \int_{\mathbb R^n}f\nabla Q_i\,dx
    =-\left(\int Q^{1+\alpha}\,dx\right)
       \sum_{j\ne i}\nabla Q(y_j-y_i)
       +o(R^{-n-2s-1}),
\end{equation}
uniformly over all configurations with minimum separation $R$.
Lemma \ref{decoupling} and the two-profile estimate used above give
\[
    0\le f\lesssim\sum_{j<k}
       (Q_j^\alpha Q_k+Q_k^\alpha Q_j).
\]
Since $Q\in L^1$, while $Q^\alpha$ is bounded and tends to zero,
dominated convergence yields
\begin{equation}\label{eq:interaction-L1-small}
    \int_{\mathbb R^n}Q(x)^\alpha Q(x-y)\,dx\longrightarrow0
    \quad\text{as }|y|\to\infty,
    \qquad \|f\|_{L^1}=o(1).
\end{equation}
Indeed, after the change of variables $z=x-y$, the integrand is
$Q(z+y)^\alpha Q(z)$ and is dominated by
$\|Q\|_{L^\infty}^\alpha Q(z)$. The convergence is uniform for $|y|\ge R$
as $R\to\infty$.

Fix $i$ and translate by $-y_i$, so that the distinguished profile is
$Q(x)$. Write
\[
    b(x)=\sum_{j\ne i}Q(x+y_j-y_i),\qquad
    h(x)=b(x)^{1+\alpha}-\sum_{j\ne i}Q(x+y_j-y_i)^{1+\alpha}.
\]
Then $f(x-y_i)=D_\alpha(Q(x),b(x))+h(x)$.
Choose a fixed $\delta\in(0,1/4)$, depending only on the fixed
parameters, sufficiently small that $b\le Q$ on $B_{\delta R}$
for all large $R$. This follows from
$b\le C(m-1)((1-\delta)R)^{-n-2s}$ and
$Q\ge c(1+\delta R)^{-n-2s}$ there. On the same ball,
\begin{equation}\label{eq:distant-profile-bounds}
    b\lesssim R^{-n-2s},\qquad
    |\nabla b|\lesssim R^{-n-2s-1}.
\end{equation}
The part outside this ball is negligible: by
\eqref{eq:interaction-L1-small},
\begin{equation}\label{eq:projection-exterior}
    \left|\int_{|x|\ge\delta R}f(x-y_i)\nabla Q(x)\,dx\right|
    \lesssim R^{-n-2s-1}\|f\|_{L^1}
    =o(R^{-n-2s-1}).
\end{equation}

For the integral over $B_{\delta R}$, the chain rule applied to
$\int_0^{Q(x)}D_\alpha(t,b(x))\,dt$ and integration by parts give
\begin{align}\label{eq:primitive-identity}
    \int_{B_{\delta R}}D_\alpha(Q,b)\nabla Q\,dx
    &=\int_{\partial B_{\delta R}}
       \left(\int_0^QD_\alpha(t,b)\,dt\right)\frac{x}{\delta R}\,dS\nonumber\\
    &\quad-\int_{B_{\delta R}}
       \bigl[(Q+b)^{1+\alpha}-b^{1+\alpha}-(1+\alpha)Qb^\alpha\bigr]
       \nabla b\,dx.
\end{align}
Since $b\le Q$ on this ball,
\[
    \left|\int_0^QD_\alpha(t,b)\,dt\right|\lesssim Q^{1+\alpha}b,
    \qquad
    \bigl|(Q+b)^{1+\alpha}-b^{1+\alpha}-(1+\alpha)Qb^\alpha\bigr|
       \lesssim Q^{1+\alpha}.
\]
The boundary integral is therefore bounded by
\[
    CR^{n-1-(2+\alpha)(n+2s)}=o(R^{-n-2s-1}).
\]
On every fixed compact set, the coefficient of $\nabla b$ in
\eqref{eq:primitive-identity} tends to $Q^{1+\alpha}$.
Dominated convergence thus shows that its integral tends to
$\int Q^{1+\alpha}\,dx$.

To replace $\nabla b(x)$ by $\nabla b(0)$, fix $M>0$.
The gradient asymptotics give
\[
    \sup_{|x|\le M}|\nabla b(x)-\nabla b(0)|
       \le\left(o_R(1)+\frac{CM}{R}\right)R^{-n-2s-1}.
\]
Here the leading term is differentiated only where $|z|\ge R-M$,
and the asymptotic remainder is uniform there.
On $M<|x|<\delta R$, \eqref{eq:distant-profile-bounds} instead bounds
the difference by $CR^{-n-2s-1}$. Consequently,
\[
    R^{n+2s+1}\int_{B_{\delta R}}Q^{1+\alpha}
       |\nabla b(x)-\nabla b(0)|\,dx
    \lesssim o_R(1)+\frac{M}{R}
       +\int_{|x|>M}Q^{1+\alpha}\,dx.
\]
Letting first $R\to\infty$ and then $M\to\infty$ proves that the
replacement error is negligible, without a moment assumption.
Substitution into \eqref{eq:primitive-identity} gives
\begin{equation}\label{eq:principal-core-projection}
    \int_{B_{\delta R}}D_\alpha(Q,b)\nabla Q\,dx
    =-\left(\int Q^{1+\alpha}\,dx\right)\nabla b(0)
       +o(R^{-n-2s-1}).
\end{equation}

It remains to estimate the term involving $h$. On $B_{\delta R}$,
\[
    |h|\lesssim R^{-(1+\alpha)(n+2s)},\qquad
    |\nabla h|\lesssim R^{-(1+\alpha)(n+2s)-1}.
\]
An integration by parts, using $Q\in L^1$, gives
\begin{align*}
    \left|\int_{B_{\delta R}}h\nabla Q\,dx\right|
    &\le\int_{\partial B_{\delta R}}Q|h|\,dS
       +\int_{B_{\delta R}}Q|\nabla h|\,dx\\
    &\lesssim R^{n-1-(2+\alpha)(n+2s)}
       +R^{-(1+\alpha)(n+2s)-1}
     =o(R^{-n-2s-1}).
\end{align*}
Together with \eqref{eq:projection-exterior} and
\eqref{eq:principal-core-projection}, this proves
\eqref{eq:projection-expansion}.

The gradient asymptotics turn this expansion into
\begin{equation}\label{eq:projection-discrete-force}
    \int_{\mathbb R^n}f\nabla Q_i\,dx
    =c_Q(n+2s)\left(\int Q^{1+\alpha}\,dx\right)
       \sum_{j\ne i}\frac{y_j-y_i}{|y_j-y_i|^{n+2s+2}}
       +o(R^{-n-2s-1}).
\end{equation}
Since every distance is at least $R$, this gives the upper bound
in \eqref{eq:maximal-projection}.

For the lower bound, choose a closest pair $y_a,y_b$, set
$e=(y_b-y_a)/R$, and let
\[
    S=\left\{i:y_i\cdot e<\frac{(y_a+y_b)\cdot e}{2}\right\}.
\]
Then $a\in S$ and $b\notin S$. The terms with both indices in $S$
cancel pairwise, leaving
\[
    \sum_{i\in S}\sum_{j\ne i}
       \frac{(y_j-y_i)\cdot e}{|y_j-y_i|^{n+2s+2}}
    =\sum_{\substack{i\in S\\j\notin S}}
       \frac{(y_j-y_i)\cdot e}{|y_j-y_i|^{n+2s+2}}
    \ge\frac{(y_b-y_a)\cdot e}{|y_b-y_a|^{n+2s+2}}
    =R^{-n-2s-1}.
\]
Every summand is positive, which justifies retaining only the closest
pair. Summing \eqref{eq:projection-discrete-force} over $i\in S$ and
using the uniform remainder, we obtain, for all sufficiently large $R$,
\[
    \sum_{i\in S}\left(\int_{\mathbb R^n}f\nabla Q_i\,dx\right)\cdot e
    \ge\frac{c_Q(n+2s)}{2}\left(\int Q^{1+\alpha}\,dx\right)
       R^{-n-2s-1}.
\]
At least one of these at most $m$ projections therefore has magnitude
at least $cR^{-n-2s-1}$. This proves the lower bound and
completes the proof.
\end{proof}

\section{The proof of Theorem \ref{thm-quantitative}}\label{sec3}

We use the configuration notation in \eqref{eq:configuration}, with $R$
sufficiently large.
We equip \(H^s(\mathbb R^n)\) with the energy inner product
\[
    \langle v,w\rangle_{H^s}
    :=
    \int_{\mathbb R^n} vw \,dx
    +
    \int_{\mathbb R^n}  (-\Delta)^{\frac{s}{2}}v\,(-\Delta)^{\frac{s}{2}}w\,dx.
\]
Choose the parameters to minimize
\[
    (y_1,\ldots,y_m)\longmapsto
    \left\|u-\sum_{i=1}^m Q(\cdot+y_i)\right\|_{H^s}^2,
\]
and write $u=\sigma+\rho$.
Near a sufficiently separated configuration, this minimum is attained:
any minimizing sequence gives sums close to the initial sum, so positivity
and separation match its $m$ peaks to the initial peaks. After a permutation,
the parameters remain in bounded neighborhoods of the initial parameters;
passing to a subsequence and using continuity gives a minimizer.
For small $\varepsilon_0$ these neighborhoods can be chosen uniformly small,
so the chosen configuration still satisfies $R\ge R_0$.
Differentiating with respect to each parameter coordinate gives
\[
    \langle\rho,\partial_{x_j}Q_i\rangle_{H^s}=0,
    \qquad
    1\le i\le m,\quad 1\le j\le n.
\]
Since
\[
    L\partial_{x_j}Q_i
    =(1+\alpha)Q_i^\alpha\partial_{x_j}Q_i,
\]
these conditions can equivalently be written as
\begin{equation}\label{eq:modulation-orthogonality}
    \langle\rho,\partial_{x_j}Q_i\rangle_{H^s}
    =
    (1+\alpha)
    \int_{\mathbb R^n}
    \rho Q_i^\alpha\partial_{x_j}Q_i\,dx
    =0.
\end{equation}
Let
\[
\mathscr F := \vspan\left\{  \partial_{x_j } Q_i, ~1 \leq i \leq m,~1\le j\le n \right\}.
\]
Let \( P_{\mathscr F^\perp} \) (respectively \( P_{\mathscr F} \)) denote the orthogonal projection from \( H^s \) onto \( \mathscr F^\perp \) (respectively \( \mathscr F \)). Thus, $\rho\in\mathscr F^\perp$.

Define the compact operator \( K \) by
\[
Kv=(1+\alpha)L^{-1}(\sigma^\alpha v),\qquad K:H^s\to H^s.
\]

We have the following result:
\begin{lemma}\label{reduction}
If $R$ is sufficiently large, then the problem
\[
  (\mathrm{id}-P_{\mathscr F^\perp}K)v=\varphi,\qquad v\in\mathscr F^\perp,
\]
is uniquely solvable for every $\varphi\in\mathscr F^\perp$, and
\begin{equation}
\|v\|_{H^s}\simeq \|\varphi\|_{H^s}.
\end{equation}
\end{lemma}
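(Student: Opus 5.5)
The plan is the standard contradiction argument for uniform invertibility of the projected linearized operator. Write $T_\sigma:=\mathrm{id}-P_{\mathscr F^\perp}K$ on $\mathscr F^\perp$. The bound $\|\varphi\|_{H^s}\lesssim\|v\|_{H^s}$ is immediate, since $\|K\|_{H^s\to H^s}\le(1+\alpha)\|\sigma\|_{L^\infty}^\alpha\lesssim 1$ uniformly, using that $L$ is an isometry and $\sigma$ is uniformly bounded. Because $K$ is compact, $T_\sigma$ is Fredholm of index zero on $\mathscr F^\perp$. It therefore suffices to prove the a priori bound
\[
\|v\|_{H^s}\le C\|T_\sigma v\|_{H^s},\qquad v\in\mathscr F^\perp,
\]
with $C$ independent of the configuration once $R\ge R_0$. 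Injectivity then gives unique solvability.

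Suppose the bound fails. Then there are configurations with $R_k\to\infty$ and $v_k\in\mathscr F_k^\perp$ with $\|v_k\|_{H^s}=1$ and $\varphi_k:=T_{\sigma_k}v_k\to0$. Write $v_k-K_kv_k=\varphi_k-P_{\mathscr F_k}K_kv_k$.

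\textbf{Step 1: local limits at each center.} For each $i$, set $v_k^i:=v_k(\cdot-y_{i,k})$, which is bounded in $H^s$. Pass to a weak limit $v_k^i\rightharpoonup v^i$, with strong convergence in $L^2_{\rm loc}$. Test the equation against $L\phi$ for $\phi\in C_c^\infty$ shifted to the $i$-th center. Since the other profiles are $O(R_k^{-n-2s})$ on compact sets around center $i$, we have $\sigma_k^\alpha(\cdot-y_{i,k})\to Q^\alpha$ locally uniformly. The $\mathscr F$-component $P_{\mathscr F_k}K_kv_k=\sum c_{k}^{jl}\partial_{x_l}Q_j$ has bounded coefficients, because the Gram matrix of $\{\partial_{x_l}Q_j\}$ is uniformly nondegenerate: the cross terms are $o(1)$ by the decay in Lemma~\ref{lem:ground-state-gradient}. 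Along a subsequence these coefficients converge. In the limit,
\[
v^i-(1+\alpha)L^{-1}(Q^\alpha v^i)\in\vspan\{\partial_{x_l}Q\}.
\]
Applying $L$ and pairing with $\partial_{x_l}Q$ via self-adjointness, the left side pairs to zero against the kernel \eqref{eq:nondegeneracy}. Hence the $\mathscr F$-part vanishes and $v^i$ lies in the kernel. The orthogonality $\langle v_k,\partial_{x_l}Q_i\rangle_{H^s}=0$ passes to the limit, because $\partial_{x_l}Q$ is a fixed $H^s$ function and we pair against it in the weak topology. So $v^i\perp\ker$, which forces $v^i=0$.

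\textbf{Step 2: contradiction with the normalization.} Pair the equation with $v_k$ in $H^s$. Since $P_{\mathscr F_k}$ is orthogonal and $v_k\in\mathscr F_k^\perp$, we get
\[
1=\langle K_kv_k,v_k\rangle_{H^s}+o(1)=(1+\alpha)\int\sigma_k^\alpha v_k^2\,dx+o(1).
\]
Split $\mathbb R^n$ into the balls $B_M(-y_{i,k})$ and their complement. On the complement, $\sigma_k^\alpha\lesssim M^{-\alpha(n+2s)}+o_k(1)$, so that part of the integral is small uniformly, since $\|v_k\|_{L^2}\le1$. On each ball, $v_k^i\to0$ in $L^2(B_M)$ by Rellich compactness in $H^s$. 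Letting $k\to\infty$ and then $M\to\infty$ gives $\int\sigma_k^\alpha v_k^2\to0$, which contradicts $1=o(1)$.

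The main obstacle is Step 1, specifically showing that the $\mathscr F$-component $P_{\mathscr F_k}K_kv_k$ is harmless in the limit. This needs the uniform nondegeneracy of the Gram matrix for widely separated modes. It also needs to track which component of $P_{\mathscr F_k}K_kv_k$ survives near center $i$: only the terms $\partial_{x_l}Q_i$ survive locally, while the others tend to zero in $H^s_{\rm loc}$ after the shift. One must then verify that the kernel characterization \eqref{eq:nondegeneracy} eliminates the coefficients. The remaining steps are routine uses of the decay of $Q$ and of local compactness.
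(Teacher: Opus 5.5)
Your proposal is correct and follows the paper's argument: a contradiction along configurations with $R\to\infty$, local weak limits at each center that vanish by nondegeneracy and the inherited orthogonality, testing the equation with $v_k$ and splitting near and far from the centers, and the Fredholm alternative for solvability. The only variation is in removing the $\mathscr F$-component from the limiting equation. You pass to limits of the bounded coefficients and kill them by pairing with $\partial_{x_l}Q$ and using self-adjointness. The paper instead shows directly that the coefficients $\int\sigma_l^\alpha v_l b_{ij}^l\,dx$ tend to zero, using orthogonality and the smallness of $(\sigma_l^\alpha-Q_{i,l}^\alpha)\partial_{x_j}Q_{i,l}$. Both routes work.
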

\begin{proof}
  We first prove the a priori estimate
  \begin{equation}\label{prio}\|v\|_{H^s}\leq C\|\varphi\|_{H^s}.\end{equation}
Otherwise, take configurations $y_{i,l}$ whose minimum separation tends
to infinity, and write $Q_{i,l}(x)=Q(x+y_{i,l})$.
Subscripts $l$ on $\sigma,\mathscr F,K$ and their associated projections
indicate this configuration. There are
$v_l,\varphi_l\in\mathscr F_l^\perp$ such that
\[
  (\mathrm{id}-P_{\mathscr F_l^\perp}K_l)v_l=\varphi_l,
  \qquad\|v_l\|_{H^s}=1,\qquad\|\varphi_l\|_{H^s}\to0.
\]
Let $b_{ij}^l$ be the $H^s$ orthonormal basis obtained by applying
Gram--Schmidt to the $mn$ functions $\partial_{x_j}Q_{i,l}$.
The projected equation gives
\begin{equation}
  Lv_l-(1+\alpha)\sigma_l^\alpha v_l
  =L\varphi_l-(1+\alpha)\sum_{i=1}^m\sum_{j=1}^n
       \left(\int\sigma_l^\alpha v_l b_{ij}^l\,dx\right)Lb_{ij}^l.
\end{equation}
Separation and radial symmetry imply
\[
  \left\|b_{ij}^l-
       \frac{\partial_{x_j}Q_{i,l}}{\|\partial_{x_j}Q\|_{H^s}}
  \right\|_{H^s}\to0.
\]
Using $v_l\in\mathscr F_l^\perp$, boundedness of $\sigma_l$,
and decay, we obtain
\[
  \left|\int\sigma_l^\alpha v_l b_{ij}^l\,dx\right|
  \lesssim\|(\sigma_l^\alpha-Q_{i,l}^\alpha)
                 \partial_{x_j}Q_{i,l}\|_{L^2}+o_l(1)=o_l(1).
\]
After passing to a subsequence,
$v_l(\cdot-y_{i,l})\rightharpoonup v_i$ in $H^s$, where
\[
L v_i - (\alpha+1)Q^{\alpha} v_i = 0, \quad \int_{\mathbb R^n } Q^{\alpha} \partial_{x_j } Q v_i dx = 0.
\]
Therefore $v_i=0$, and $v_l(\cdot-y_{i,l})\to0$ in $L^2_{\mathrm{loc}}$.

For a fixed \( M \) large enough such that $Q^{\alpha}(x)\leq \frac{1}{100(\alpha+1)m^{\alpha+1}}$ in $|x|>M$, test the equation with \( v_l \), we obtain
  \begin{equation}
  \begin{aligned}
\|v_l\|_{H^s}^2&= \int_{\mathbb R^n} (\alpha+1)\sigma_l^{\alpha} v_l^2 dx +o_l(1)\\&
\leq C\int_{\cup_{i=1}^{m} \{|x+y_{i,l}|\le M\} } v_l^2 dx\\&
\quad{}+\int_{\mathbb R^n \setminus \cup_{i=1}^{m} \{|x+y_{i,l}|\le M\}} (\alpha +1) \sigma_l^{\alpha} v_l^2 dx +o_l(1)
\\&
\leq o_l(1)+\frac{1}{100}\|v_l\|_{L^2}^2\\&
\leq \frac{1}{2}.
\end{aligned}
\end{equation}
This leads to a contradiction with $\|v_l\|_{H^s}=1.$  This establishes the uniform a priori estimate. For each fixed configuration, the operator
\[
    P_{\mathscr F^\perp} K\big|_{\mathscr F^\perp}:\mathscr F^\perp\to \mathscr F^\perp
\]
is compact. The a priori estimate implies that
\(\mathrm{id}-P_{\mathscr F^\perp}K\) is injective on \(\mathscr F^\perp\).
The Fredholm's alternative therefore gives solvability.

Finally,
\[
    \|Kv\|_{H^s}
    \le(1+\alpha)\|\sigma\|_{L^\infty}^{\alpha}\|v\|_{H^s}
    \le(1+\alpha)m^\alpha\|Q\|_{L^\infty}^{\alpha}\|v\|_{H^s},
\]
so
\[
    \|\varphi\|_{H^s}
    \lesssim \|v\|_{H^s}.
\]
Together with the a priori estimate, this establishes
the two-sided bound.
\end{proof}

\begin{lemma}\label{lem:nonlinear-estimates}
For $\|v\|_{H^s},\|w\|_{H^s}\le1$, uniformly over separated configurations,
\begin{align}
 \|N_\sigma(v)\|_{H^{-s}}
 &\lesssim\|v\|_{H^s}^{1+\min\{\alpha,1\}},\label{eq:remainder-bound}\\
 \|N_\sigma(v)-N_\sigma(w)\|_{H^{-s}}
 &\lesssim
 \bigl(\|v\|_{H^s}^{\min\{\alpha,1\}}
       +\|w\|_{H^s}^{\min\{\alpha,1\}}\bigr)
       \|v-w\|_{H^s}.\label{eq:remainder-difference}
\end{align}
For every translation mode,
\begin{equation}\label{3.36}
 |\langle N_\sigma(v),\nabla Q_i\rangle|
 \lesssim\|v\|_{H^s}^2,
\end{equation}
and
\begin{equation}\label{4.111}
 \left|\int(\sigma^\alpha-Q_i^\alpha)v\nabla Q_i\,dx\right|
 \lesssim\|v\|_{H^s}^{\alpha+2}
       +R^{-n-2s}\|v\|_{H^s}^{\min\{\alpha,1\}}.
\end{equation}
If $n\in\{1,2\}$ and $0<\alpha<1$, then
\begin{equation}\label{eq:linear-defect}
 \|(\sigma^\alpha-Q_i^\alpha)\nabla Q_i\|_{L^2}
 \lesssim R^{-n-2s}.
\end{equation}
\end{lemma}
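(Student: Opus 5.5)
The plan is to reduce every estimate to pointwise inequalities for the scalar map $G(t)=|t|^\alpha t$ and then apply H\"older and Sobolev embeddings. The key tool is the embedding $H^s\hookrightarrow L^p$ for $2\le p\le p_s$, where $p_s=2n/(n-2s)$ if $n>2s$ and $p_s<\infty$ is arbitrary otherwise. By duality, $\|g\|_{H^{-s}}\lesssim\|g\|_{L^{p'}}$. The subcriticality condition \eqref{eq:crit_exp} gives $2+\alpha<p_s$ when $n>2s$.

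\textbf{Pointwise bounds for the remainder.} Since $\sigma\ge0$ and $\|\sigma\|_{L^\infty}\le m\|Q\|_{L^\infty}$, Taylor's formula gives
\[
  |N_\sigma(v)|\lesssim \min\{\sigma^{\alpha-1}v^2,\ |v|^{1+\alpha}\}\lesssim |v|^{1+\alpha}+\sigma^{\alpha-1}v^2\mathbf 1_{\alpha>1}.
\]
For $\alpha\le1$, this yields $|N_\sigma(v)|\lesssim|v|^{1+\alpha}$, and therefore
\[
  \|N_\sigma(v)\|_{H^{-s}}\lesssim\|v\|_{L^{2+\alpha}}^{1+\alpha}\lesssim\|v\|_{H^s}^{1+\alpha}.
\]
For $\alpha>1$, the bound becomes $|N_\sigma(v)|\lesssim v^2+|v|^{1+\alpha}$. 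The quadratic term is estimated in $L^{p'}$ with $p'=4/3$ or via $L^{2p'}$; to stay within the admissible range, I use $\|v^2\|_{L^{p'}}=\|v\|_{L^{2p'}}^2$ with $2p'\in[2,p_s]$, choosing $p=3$ or $p$ near $2$. Because $\|v\|_{H^s}\le1$, the higher power is dominated by the lower one.

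\textbf{Difference estimate.} The difference estimate \eqref{eq:remainder-difference} follows in the same way from the pointwise bound
\[
  |N_\sigma(v)-N_\sigma(w)|\lesssim |G'(\sigma+v)-G'(\sigma)|\,|v-w|+|G'(\sigma+w)-G'(\sigma+v)|\,|v-w|,
\]
combined with $|G'(a)-G'(b)|\lesssim|a-b|^{\min\{\alpha,1\}}(1+|a|+|b|)^{(\alpha-1)_+}$. The result then follows from H\"older and Sobolev.

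\textbf{Estimate \eqref{3.36}.} This is where the gain to the power $2$ comes from, even for $\alpha<1$: the test function $\nabla Q_i$ is bounded by $Q_i$. On the set where $|v|\le\sigma$, one has $|N_\sigma(v)|\lesssim\sigma^{\alpha-1}v^2$. Since $\sigma\ge Q_i$ and $\alpha-1<0$, this gives
\[
  \sigma^{\alpha-1}|\nabla Q_i|\lesssim Q_i^{\alpha}\le C,
\]
so this part contributes at most $\|v\|_{L^2}^2$. On the set where $|v|>\sigma$, one has $|N_\sigma(v)|\lesssim|v|^{1+\alpha}$ and $|\nabla Q_i|\lesssim Q_i\le\sigma<|v|$, so the integrand is at most $|v|^{2+\alpha}$. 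The $L^{2+\alpha}$ embedding then bounds this by $\|v\|^{2+\alpha}\le\|v\|^2$. The case $\alpha\ge1$ is direct.

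\textbf{Estimate \eqref{4.111}.} The plan is to split into $E_1=\{\sigma-Q_i\le Q_i\}$ and its complement. On $E_1$, the mean value theorem gives
\[
  \sigma^\alpha-Q_i^\alpha\lesssim Q_i^{\alpha-1}(\sigma-Q_i).
\]
On the complement, I use $\sigma^\alpha-Q_i^\alpha\lesssim(\sigma-Q_i)^\alpha$ when $\alpha\le1$, and a Lipschitz bound when $\alpha>1$. The quantity $\sigma-Q_i=\sum_{j\ne i}Q_j$ is $\lesssim R^{-n-2s}$ near $y_i$, while $\nabla Q_i$ decays like $Q_i^{(n+2s+1)/(n+2s)}$. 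Thus the integral of $|\sigma^\alpha-Q_i^\alpha|\,|\nabla Q_i|\,|v|$ is bounded by H\"older with $v\in L^2$ plus a weight term. Where the weight is not square integrable (small $\alpha$, far from $y_i$), I instead bound $|v|$ against $|v|^{1+\alpha}$ using Young's inequality: on the set $\{Q_i\le |v|\}$ the integrand is $\lesssim|v|^{2+\alpha}$, and on the set $\{|v|<Q_i\}$ the extra factor $Q_i^{1-\min(\alpha,1)}$ furnishes integrability. This produces the two terms in \eqref{4.111}.

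\textbf{Estimate \eqref{eq:linear-defect}.} For $n\le2$ and $\alpha<1$, I compute directly. Near $y_j$ ($j\ne i$), $|\nabla Q_i|\lesssim R^{-n-2s-1}$ and $(\sigma^\alpha-Q_i^\alpha)\lesssim1$ on a region of unit scale; the tails give $\int_{|x|>R}|x|^{-2(n+2s+1)}\,dx$, which is small. Near $y_i$, the bound $Q^{\alpha-1}R^{-n-2s}|\nabla Q|$ is in $L^2$. In the intermediate region, $(R^{-n-2s})^\alpha|x|^{-n-2s-1}$ has $L^2$ norm of order $R^{-\alpha(n+2s)}\cdot O(1)$, so the required power needs a refined bound: I use
\[
  \sigma^\alpha-Q_i^\alpha\le\min\{Q_i^{\alpha-1}b,\ b^\alpha\}.
\]
The crossover region $Q_i\sim b$, i.e.\ $|x+y_i|\sim R$, gives $R^{-\alpha(n+2s)}\cdot R^{-n-2s-1}\cdot R^{n/2}$, which is $\lesssim R^{-n-2s}$ when $n/2\le1+\alpha(n+2s)$. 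This holds for $n\le2$.

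\textbf{Main obstacle.} I expect the bookkeeping in \eqref{4.111} for small $\alpha$ to be the hardest step: choosing the splitting so that every piece lands in either $\|v\|^{\alpha+2}$ or $R^{-n-2s}\|v\|^{\min\{\alpha,1\}}$.
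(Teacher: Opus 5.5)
Your arguments for \eqref{eq:remainder-bound}, \eqref{eq:remainder-difference}, \eqref{3.36} and \eqref{eq:linear-defect} follow the paper's route: scalar Taylor bounds with H\"older, Sobolev and duality; the split $\{|v|\le\sigma\}$ versus $\{|v|>\sigma\}$; and concavity on $B_{R/2}(-y_i)$ plus the gradient tail. One harmless slip: for $\alpha>1$ the Taylor bound is the \emph{sum}, not the minimum, of $\sigma^{\alpha-1}v^2$ and $|v|^{1+\alpha}$, which is what you actually use. The genuine problem is in \eqref{4.111}, the step you flagged.

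You compare $|v|$ with $Q_i$ and assert that on $\{Q_i\le|v|\}$ the integrand is $\lesssim|v|^{2+\alpha}$. This rests on $\sigma^\alpha-Q_i^\alpha\lesssim|v|^\alpha$. That inequality holds on $E_1$, where $\sigma^\alpha-Q_i^\alpha\lesssim Q_i^{\alpha-1}(\sigma-Q_i)\le Q_i^\alpha$. Off $E_1$, however, $\sigma^\alpha-Q_i^\alpha\simeq(\sigma-Q_i)^\alpha$, and the claim fails on $\{Q_i\le|v|\ll\sigma-Q_i\}$. This is exactly the region away from $-y_i$ where you intend to apply it. For instance, take $|y_i-y_j|=R$ and $|v|=2Q_i$ on $B_1(-y_j)$. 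There the integrand is $\simeq R^{-n-2s-1}\cdot R^{-n-2s}$, while $|v|^{2+\alpha}\simeq R^{-(2+\alpha)(n+2s)}$, which is smaller whenever $\alpha(n+2s)>1$. That situation occurs (for $n\ge5$) inside the regime where your plan says the weight is not square integrable.

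The repair is to compare $|v|$ with $\max\{Q_i,\sigma-Q_i\}$, as the paper does. Off $E_1$, if $|v|\ge\sigma-Q_i$ then $(\sigma-Q_i)^\alpha Q_i|v|\le|v|^{2+\alpha}$. If instead $|v|<\sigma-Q_i$, then $(\sigma-Q_i)^\alpha|v|\le(\sigma-Q_i)|v|^\alpha$. Together with your $E_1$ analysis this gives the global bound
\[
 |(\sigma^\alpha-Q_i^\alpha)v\nabla Q_i|\lesssim|v|^{\alpha+2}+Q_i(\sigma-Q_i)|v|^{\min\{\alpha,1\}}.
\]
H\"older's inequality with $\|Q_i(\sigma-Q_i)\|_{L^p}\lesssim R^{-n-2s}$, where $p=2/(2-\min\{\alpha,1\})$, then closes the estimate. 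There is no need to locate where the weight is square integrable.

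Alternatively, your first instinct suffices on its own once you interpolate the right-hand side. For $\alpha<1$, weighted AM--GM gives
$R^{-(1+\alpha)(n+2s)/2}\|v\|_{H^s}\le\|v\|_{H^s}^{2+\alpha}+R^{-n-2s}\|v\|_{H^s}^{\alpha}$.
A region-by-region computation like your proof of \eqref{eq:linear-defect}, carried out in every dimension, shows
$\|(\sigma^\alpha-Q_i^\alpha)\nabla Q_i\|_{L^2}\lesssim R^{-n-2s+(n/2-1-\alpha(n+2s))_+}(\log R)^{1/2}=o(R^{-(1+\alpha)(n+2s)/2})$.
So plain H\"older with $v\in L^2$ already yields \eqref{4.111}, and the Young-type splitting is unnecessary.
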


\begin{proof}
The scalar Taylor estimates give
\[
 |N_\sigma(v)-N_\sigma(w)|\lesssim
 \begin{cases}
 (|v|^\alpha+|w|^\alpha)|v-w|,&0<\alpha\le1,\\
 [\sigma^{\alpha-1}(|v|+|w|)+|v|^\alpha+|w|^\alpha]|v-w|,
       &\alpha>1.
 \end{cases}
\]
Sobolev embedding, its dual, and H\"older's inequality prove
\eqref{eq:remainder-difference}, since
$\|\sigma\|_{L^{\alpha+2}}$ is uniformly bounded.
Taking $w=0$ gives \eqref{eq:remainder-bound}.

For \eqref{3.36}, use $|\nabla Q_i|\lesssim Q_i\le\sigma\lesssim1$
and split into $|v|\le\sigma$ and $|v|>\sigma$:
\[
 |\langle N_\sigma(v),\nabla Q_i\rangle|
 \lesssim\int_{\{|v|\le\sigma\}}\sigma^\alpha|v|^2\,dx
       +\int_{\{|v|>\sigma\}}|v|^{\alpha+2}\,dx
 \lesssim\|v\|_{H^s}^2.
\]
For the linear interaction, boundedness of the profiles and
$|\nabla Q_i|\lesssim Q_i$ give
\[
 |(\sigma^\alpha-Q_i^\alpha)v\nabla Q_i|
 \lesssim |v|^{\alpha+2}
       +Q_i(\sigma-Q_i)|v|^{\min\{\alpha,1\}}.
\]
Indeed, split according to $Q_i>\sigma-Q_i$ and its complement,
and then compare $|v|$ with the larger of these two quantities.
Decay gives
$\|Q_i(\sigma-Q_i)\|_{L^p}\lesssim R^{-n-2s}$ for $1\le p\le2$.
H\"older's inequality with $p=2/(2-\min\{\alpha,1\})$ proves
\eqref{4.111}.

Finally, on $B_{R/2}(-y_i)$, concavity gives
\[
 |(\sigma^\alpha-Q_i^\alpha)\nabla Q_i|
 \lesssim R^{-n-2s}(1+|x+y_i|)^{-\alpha(n+2s)-1}.
\]
This has $L^2$ norm at most $CR^{-n-2s}$ when $n\le2$.
On the complement, boundedness of $\sigma$ and gradient decay give
\[
 \|(\sigma^\alpha-Q_i^\alpha)\nabla Q_i
       \|_{L^2(\mathbb R^n\setminus B_{R/2}(-y_i))}
 \lesssim R^{n/2-n-2s-1}\lesssim R^{-n-2s}.
\]
Together these prove \eqref{eq:linear-defect}.
\end{proof}

Define
\[
    W(x):=\sum_{i=1}^m(1+|x+y_i|)^{-\alpha(n+2s)},
\]
and
\[
X:=
    \left\{
        v\in C(\mathbb R^n):
        \|v\|_X:=\sup_{x\in\mathbb R^n}
        \frac{|v(x)|}{W(x)}<\infty
    \right\}.
\]
\begin{lemma}\label{lem:weighted-linear-small-alpha}
Assume \eqref{eq:crit_exp} and $0<\alpha<\alpha_*$.
For sufficiently large \(R\), the solution
\[v=\left[
        (\mathrm{id}-P_{\mathscr F^\perp}K)
        \big|_{\mathscr F^\perp}
    \right]^{-1}
    P_{\mathscr F^\perp} L^{-1} g\]
belongs to \(H^s\cap X\cap\mathscr F^\perp\) and satisfies
\[
    \|v\|_X\lesssim\|g\|_X,
    \qquad
    \|v\|_{H^s}\lesssim\|g\|_{H^{-s}},
    \qquad g\in L^2(\mathbb R^n)\cap X.
\]
The implicit constants are independent of the centers.
\end{lemma}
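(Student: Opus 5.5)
The plan is to prove the weighted bound by a blow-up (contradiction) argument built on an explicit pointwise representation of $v$. The energy bound comes first, directly from Lemma~\ref{reduction}. Since $P_{\mathscr F^\perp}$ and $L^{-1}$ are contractions for the $H^s$ and isometric norms,
\[
\|P_{\mathscr F^\perp}L^{-1}g\|_{H^s}\le\|g\|_{H^{-s}},
\]
so $\|v\|_{H^s}\lesssim\|g\|_{H^{-s}}$ uniformly in the centers. To rewrite the equation, let $b_{ij}$ be the Gram--Schmidt basis from the proof of Lemma~\ref{reduction}. Unwinding $v=P_{\mathscr F^\perp}L^{-1}g+P_{\mathscr F^\perp}Kv$ gives
\[
v=L^{-1}\Bigl(g+(1+\alpha)\sigma^\alpha v\Bigr)-\sum_{i,j}c_{ij}b_{ij},
\qquad
c_{ij}=\int\bigl(g+(1+\alpha)\sigma^\alpha v\bigr)b_{ij}\,dx.
\]
Here each $b_{ij}$ is a bounded combination of the $\partial_{x_k}Q_l$. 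These decay like $(1+|x+y_l|)^{-n-2s-1}\lesssim W$ by Lemma~\ref{lem:ground-state-gradient}, and $W\,|b_{ij}|\in L^1$ uniformly. Hence $|c_{ij}|\lesssim\|g\|_X+\|v\|_X$, and the projection term is bounded in $X$.

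The basic analytic input is that $L^{-1}$ maps $X$ to $X$ with a uniform bound. The Bessel-type kernel $G$ of $L^{-1}$ is positive and integrable, with $G(x)\lesssim|x|^{2s-n}$ near $0$ (logarithmic or bounded if $n\le 2s$) and $G(x)\lesssim|x|^{-n-2s}$ at infinity. Put $\beta=\alpha(n+2s)$; then $\alpha<\alpha_*$ gives $\beta<n/2<n+2s$. Splitting the convolution $G*(1+|\cdot|)^{-\beta}$ into $|z|<|x|/2$ and its complement yields $G*(1+|\cdot|)^{-\beta}\lesssim(1+|x|)^{-\beta}$. Summing over the centers gives $\|L^{-1}h\|_X\lesssim\|h\|_X$. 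The same kernel bounds show that $L^{-1}$ maps bounded functions to uniformly H\"older functions, which supplies local compactness. I also need $v\in X$ a priori, since $v$ is only known to lie in $H^s$. Sobolev embedding and $\sigma^\alpha\in L^\infty$ put the right-hand side in $L^2\cap L^p$. Iterating the representation finitely many times, using that $G$ gains integrability and that far decay is inherited from $g$, $b_{ij}$ and $\sigma^\alpha$, yields $v\in L^\infty$ and then $v\in X$. If that bootstrap is awkward, I would instead solve the fixed-point problem directly in $H^s\cap X$ and invoke uniqueness from Lemma~\ref{reduction}.

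For the a priori bound, suppose there are configurations with separation $\to\infty$, $\|v_l\|_X=1$ and $\|g_l\|_X\to0$. Fix $M$ so large that
\[
(1+\alpha)\sigma_l^\alpha\le\eta \quad\text{off } \bigcup_i B_M(-y_{i,l}),
\]
where $\eta$ is small compared with the $X$-norm of $L^{-1}$. The far part then contributes at most $C\eta$ to $\|v_l\|_X$, and the $g_l$ term contributes $o(1)$. It remains to show that $v_l\to0$ uniformly on each $B_{M'}(-y_{i,l})$ and that $c^l_{ij}\to0$; these two facts give $\|v_l\|_X\le o(1)+C\eta<1$, a contradiction.

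The local step is the main obstacle. The functions $v_l(\cdot-y_{i,l})$ are bounded by $W$ and equicontinuous, so a subsequence converges locally uniformly to some $v_i$ with $|v_i|\lesssim(1+|x|)^{-\beta}$. The limit satisfies
\[
Lv_i=(1+\alpha)Q^\alpha v_i-\sum_j c_{ij}\,(1+\alpha)Q^\alpha\partial_{x_j}Q,
\]
together with the limiting orthogonality $\int Q^\alpha\partial_{x_j}Q\,v_i=0$. Contributions of the other profiles disappear in the limit because of the separation. Since $\beta<n/2$, $v_i$ need not lie in $L^2$. I would bootstrap decay through $L^{-1}$: each application improves the decay by another factor $Q^\alpha$, up to the cap $n+2s$, until $v_i\in H^s$. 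Testing against $\partial_{x_j}Q$ then forces the limiting coefficients to vanish, because $L\partial_{x_j}Q=(1+\alpha)Q^\alpha\partial_{x_j}Q$ and the Gram matrix of $\{Q^\alpha\partial_{x_j}Q\}$ against $\{\partial_{x_k}Q\}$ is nondegenerate. The nondegeneracy \eqref{eq:nondegeneracy} together with orthogonality then gives $v_i=0$. Dominated convergence in the formula for $c^l_{ij}$ gives $c^l_{ij}\to0$, which closes the contradiction and proves the weighted bound.
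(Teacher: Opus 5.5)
Your proposal is correct and follows essentially the paper's proof. The steps match: the $H^s$ bound from Lemma~\ref{reduction}, the kernel representation with $G*W\lesssim W$, an $L^p$ bootstrap for membership in $X$, and a blow-up in which each limit $v_i$ has its decay upgraded until $v_i\in H^s$ and then vanishes by nondegeneracy, with smallness of $\sigma_l^\alpha$ off the cores closing the contradiction. The only minor difference is in how the coefficients vanish: the paper proves $c_{i,j,l}\to0$ before the blow-up by testing against $\partial_{x_b}Q_{a,l}$ and using $v_l\in\mathscr F_l^\perp$, whereas you carry the limiting coefficients into the limit equation and remove them afterwards via the nondegenerate pairing $\int Q^\alpha\partial_{x_j}Q\,\partial_{x_k}Q\,dx$. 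Both versions work.
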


\begin{proof}
Existence, uniqueness, and the \(H^s\) estimate follow
directly from Lemma~\ref{reduction}. It remains to prove
that \(v\in X\) and establish the uniform weighted bound.

Let $G$ denote the convolution kernel of
$L^{-1}$, so that
\[
    \widehat G(\xi)=\frac{1}{1+|\xi|^{2s}}.
\]
The kernel $G$ is positive and radial, with
\[
    G\in L^1(\mathbb R^n),
    \qquad
    \int_{\mathbb R^n}G(x)\,dx=1.
\]
Its behaviour near the origin is given by
\begin{equation}\label{eq:resolvent-kernel-origin}
    G(x)\simeq
    \begin{cases}
        |x|^{-(n-2s)}, & n>2s,\\[2mm]
        \log\displaystyle\frac{2}{|x|}, & n=2s,\\[2mm]
        1, & n<2s,
    \end{cases}
    \qquad 0<|x|\le1,
\end{equation}
whereas
\begin{equation}\label{eq:resolvent-kernel-infinity}
    G(x)\simeq |x|^{-(n+2s)},
    \qquad |x|\ge1.
\end{equation}
Indeed, the fractional heat-kernel representation gives
\[
    G(x)\simeq\int_0^\infty e^{-t}
    \min\left\{
        t^{-\frac{n}{2s}},\frac{t}{|x|^{n+2s}}
    \right\}\,dt.
\]
Splitting the integral at $t=|x|^{2s}$ gives
\eqref{eq:resolvent-kernel-origin} and
\eqref{eq:resolvent-kernel-infinity}.

Consequently,
\begin{equation}\label{eq:resolvent-kernel-integrability}
    \begin{cases}
        G\in L^r(\mathbb R^n)
        \quad\text{for }1\le r<\dfrac{n}{n-2s},
        & n>2s,\\[2mm]
        G\in L^r(\mathbb R^n)
        \quad\text{for }1\le r<\infty,
        & n=2s,\\[2mm]
        G\in L^r(\mathbb R^n)
        \quad\text{for }1\le r\le\infty,
        & n<2s.
    \end{cases}
\end{equation}
The integrability of $G$ and its decay at infinity also imply
\begin{equation}\label{eq:weighted-convolution-refinement}
    G*(1+|\cdot|)^{-d}(x)
    \le C(1+|x|)^{-d},
    \qquad 0<d<n.
\end{equation}
The constant in this estimate may depend on $d$.
In particular, by translation invariance,
\[
    G*W\lesssim W,
\]
with a constant independent of the centers.

The projected equation is equivalent to
\begin{equation}\label{eq:weighted-integral-equation}
    v
    =
    G*\bigl((1+\alpha)\sigma^\alpha v+g\bigr)
    -\sum_{i,j}c_{i,j}\partial_{x_j}Q_i
\end{equation}
for suitable coefficients \(c_{i,j}\).

We first show that the $H^s$ solution belongs to $X$.
Since $g\in L^2\cap X$ and $W\le C$, we have
$g\in L^2\cap L^\infty$. Hence
\[
    G*g-\sum_{i,j}c_{i,j}\partial_{x_j}Q_i
    \in L^2\cap L^\infty.
\]
Moreover, $\sigma^\alpha$ is bounded.

If $n\le2s$, then $G\in L^2$, and Young's inequality gives
\[
    \|G*(\sigma^\alpha v)\|_{L^\infty}
    \le
    \|G\|_{L^2}\|\sigma^\alpha v\|_{L^2}<\infty.
\]
Thus $v\in L^\infty$.

If $n>2s$, start from $v\in L^2$.
Whenever $v\in L^p$ with $p\le n/(2s)$, Young's inequality and
$G\in L^{n/(n-s)}$ yield $v\in L^{np/(n-sp)}$.
After finitely many steps, $v\in L^p$ for some
$p>\frac{n}{2s}$. Since $G\in L^{p/(p-1)}$, another application
of Young's inequality yields $v\in L^\infty$.

Using
\[
    \sigma^\alpha\lesssim W,
    \qquad
    |\partial_{x_j}Q_i(x)|
    \lesssim(1+|x+y_i|)^{-n-2s-1}
    \lesssim W(x),
\]
we deduce from \eqref{eq:weighted-integral-equation} that
\[
    |v(x)|
    \lesssim
    \bigl(\|v\|_{L^\infty}+\|g\|_X\bigr)(G*W)(x)
    +\sum_{i,j}|c_{i,j}|\,|\partial_{x_j}Q_i(x)|
    \le CW(x).
\]
At this stage the constant may depend on the solution.
Finally, convolution with $G\in L^1$ maps bounded functions
to continuous functions, by continuity of translations in
$L^1$. Thus, $v$ has a continuous representative and belongs
to $X$. This argument establishes membership only; we next
prove that the weighted bound is uniform.

Suppose otherwise. Use the configuration subscripts $l$ as in the proof
of Lemma~\ref{reduction}, including the corresponding weights $W_l$
and spaces $X_l$. There exist configurations with minimum separation
tending to infinity and solutions
\[v_l=\left[
        (\mathrm{id}-P_{\mathscr F_l^\perp}K_l )
        \big|_{\mathscr F_l^\perp}
    \right]^{-1}
    P_{\mathscr F_l^\perp} L^{-1} g_l\]
such that
\[
    \|v_l\|_{X_l}=1,\qquad
    \|g_l\|_{X_l}\to0.
\]
As in the proof of Lemma~\ref{reduction}, write
\[
    Lv_l-(1+\alpha)\sigma_l^\alpha v_l
    =
    g_l-\sum_{i,j}c_{i,j,l}L\partial_{x_j}Q_{i,l}.
\]
Testing against \(\partial_{x_b}Q_{a,l}\) and using
\(v_l\in\mathscr F_l^\perp\) gives
\begin{align*}
    \sum_{i,j}c_{i,j,l}
    \langle \partial_{x_j}Q_{i,l},\partial_{x_b}Q_{a,l}\rangle_{H^s}
    &=
    \int_{\mathbb R^n}g_l \partial_{x_b}Q_{a,l}\,dx\\
    &\quad+
    (1+\alpha)\int_{\mathbb R^n}
    (\sigma_l^\alpha-Q_{a,l}^\alpha)
    v_l \partial_{x_b}Q_{a,l}\,dx.
\end{align*}
Both terms tend to zero by separation and decay,
since \(|v_l|\le W_l\le C\).
The uniform invertibility of the Gram matrices,
as used in Lemma~\ref{reduction}, implies
\[
    c_{i,j,l}\to0.
\]

The integral equation gives local uniform compactness.
After translating to each center and passing to a
subsequence,
\[
    v_l(\cdot-y_{i,l})\to v_i
    \qquad\text{locally uniformly},
\]
where
\[
    Lv_i-(1+\alpha)Q^\alpha v_i=0,
    \qquad
    \int_{\mathbb R^n}
    v_iQ^\alpha\partial_{x_j}Q\,dx=0,
\]
and
\[
    |v_i(x)|\lesssim(1+|x|)^{-\alpha(n+2s)}.
\]
Iterating
\[
    v_i=(1+\alpha)G*(Q^\alpha v_i)
\]
and using \eqref{eq:weighted-convolution-refinement}
increases the decay exponent by \(\alpha(n+2s)\) at each step.
Since \(0<\alpha(n+2s)<\frac{n}{2}\), after finitely many steps the
exponent lies in \((\frac{n}{2},n)\).
Hence \(v_i\in L^2\), and the equation gives
\(v_i\in H^s\).
The nondegeneracy argument in Lemma~\ref{reduction}
therefore yields \(v_i=0\).

Finally, fix $M>1$ and split the integral equation over
$\bigcup_iB_M(-y_{i,l})$ and its complement.
The local uniform convergence to zero makes the first contribution
vanish as $l\to\infty$. On the complement,
$\sigma_l^\alpha\lesssim M^{-\alpha(n+2s)}$; using
$G*W_l\lesssim W_l$, $\|g_l\|_{X_l}\to0$, and $c_{i,j,l}\to0$ gives
\[
    1\le CM^{-\alpha(n+2s)}+o_l(1)
    \qquad\text{for each fixed }M.
\]
Letting $l\to\infty$ and then choosing $M$ sufficiently large gives
a contradiction.
This proves the uniform weighted estimate.
\end{proof}

\begin{lemma}\label{lem:corrected-configuration}
Assume \eqref{eq:crit_exp}. For sufficiently large $R$, there is a
unique $w_\sigma\in\mathscr F^\perp$ with sufficiently small $H^s$ norm
such that
\begin{equation}\label{eq:projected-correction}
 P_{\mathscr F^\perp}L^{-1}\mathcal E(U_\sigma)=0,
 \qquad U_\sigma=\sigma+w_\sigma.
\end{equation}
Moreover,
\begin{equation}\label{eq:correction-norm}
 \|w_\sigma\|_{H^s}
 \simeq\|P_{\mathscr F^\perp}L^{-1}f\|_{H^s}
 \lesssim\|f\|_{H^{-s}}.
\end{equation}
If $0<\alpha<\alpha_*$, then $w_\sigma\in X$ and
\begin{equation}\label{eq:weighted-correction-bounds}
 \|w_\sigma\|_{H^s}\lesssim R^{n/2-(1+\alpha)(n+2s)},
 \qquad |w_\sigma(x)|\lesssim R^{-n-2s}W(x).
\end{equation}
The constants and the smallness threshold are independent of the centers.
\end{lemma}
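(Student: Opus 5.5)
We reformulate \eqref{eq:projected-correction} as a fixed point problem on $\mathscr F^\perp$ and apply the contraction mapping principle. Since $\mathcal E(\sigma+w)=Lw-(1+\alpha)\sigma^\alpha w-f-N_\sigma(w)$, the equation $P_{\mathscr F^\perp}L^{-1}\mathcal E(\sigma+w)=0$ with $w\in\mathscr F^\perp$ becomes
\[
 (\mathrm{id}-P_{\mathscr F^\perp}K)w=P_{\mathscr F^\perp}L^{-1}\bigl(f+N_\sigma(w)\bigr).
\]
Write $T$ for the inverse of $(\mathrm{id}-P_{\mathscr F^\perp}K)|_{\mathscr F^\perp}$, which is uniformly bounded with uniformly bounded inverse by Lemma~\ref{reduction}. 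The equation is then $w=\mathcal T(w):=T P_{\mathscr F^\perp}L^{-1}(f+N_\sigma(w))$.

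Set $\eta:=\|P_{\mathscr F^\perp}L^{-1}f\|_{H^s}\le\|f\|_{H^{-s}}$; the inequality holds because $L$ is an isometry and the projection is orthogonal. By Lemma~\ref{eqf}, $\eta=o_R(1)$. On the ball $B=\{w\in\mathscr F^\perp:\|w\|_{H^s}\le 2C_T\eta\}$, estimate \eqref{eq:remainder-bound} gives
\[
 \|\mathcal T(w)\|_{H^s}\le C_T\eta+C\eta^{1+\min\{\alpha,1\}}\le 2C_T\eta
\]
for large $R$. Estimate \eqref{eq:remainder-difference} gives a Lipschitz constant of order $\eta^{\min\{\alpha,1\}}<1/2$. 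So $\mathcal T$ is a contraction on $B$ and has a unique fixed point $w_\sigma$. Uniqueness among all $w$ with small norm follows the same way: any solution of norm at most $\delta$ satisfies $\|w\|\le C_T\eta+C\delta^{\min\{\alpha,1\}}\|w\|$, hence $\|w\|\lesssim\eta$, so it lies in $B$. The lower bound in \eqref{eq:correction-norm} comes from applying the two-sided bound of Lemma~\ref{reduction} to the identity above:
\[
 \eta\lesssim\|w_\sigma\|_{H^s}+\|N_\sigma(w_\sigma)\|_{H^{-s}}\lesssim\|w_\sigma\|_{H^s}.
\]
This gives $\|w_\sigma\|_{H^s}\simeq\eta$, and all constants are uniform in the centers.

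For $0<\alpha<\alpha_*$, the $H^s$ bound in \eqref{eq:weighted-correction-bounds} is just Lemma~\ref{eqf}. The pointwise bound needs a second contraction argument, now in the norm $\|w\|_{H^s}+R^{n+2s}\|w\|_X$, using Lemma~\ref{lem:weighted-linear-small-alpha}. Two weighted estimates are required.

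First, $\|f\|_X\lesssim R^{-n-2s}$. Near $-y_i$, Lemma~\ref{decoupling} gives
\[
 f\lesssim Q_i^\alpha\sum_{j\ne i}Q_j+\sum_{j\ne i}Q_j^\alpha Q_i .
\]
The first term is at most $R^{-n-2s}(1+|x+y_i|)^{-\alpha(n+2s)}$ in the region where $Q_i$ dominates. The second term is at most $Q_i\,R^{-\alpha(n+2s)}$, which is smaller there. In the far region where all profiles are small, $D_\alpha(a,b)\lesssim a^\alpha b$ with $a\ge b$ bounds $f$ by (larger profile)$^\alpha$ times (smaller profile). The smaller profile is at most $R^{-n-2s}$ up to constants, since any point is at distance at least $R/2$ from all but one center. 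This gives $f\lesssim R^{-n-2s}W$ everywhere.

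Second, the remainder $N_\sigma$ must map the weighted ball into itself with a small weighted Lipschitz constant. For $\alpha\le1$, $|N_\sigma(w)|\lesssim|w|^{1+\alpha}$, so $|w|\le AR^{-n-2s}W$ gives
\[
 |N_\sigma(w)|\lesssim A^{1+\alpha}R^{-(1+\alpha)(n+2s)}W^{1+\alpha}\le \bigl(A^{1+\alpha}R^{-\alpha(n+2s)}\bigr)R^{-n-2s}W,
\]
using $W\lesssim1$. The difference estimate follows similarly from $|N_\sigma(v)-N_\sigma(w)|\lesssim(|v|^\alpha+|w|^\alpha)|v-w|$. Since $f\in L^2\cap X$ and $N_\sigma(w)\in L^2\cap X$ (as $w\in L^\infty$), Lemma~\ref{lem:weighted-linear-small-alpha} applies to $g=f+N_\sigma(w)$. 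The contraction therefore closes in the combined norm on the set $\{\|w\|_{H^s}\le C\eta,\ \|w\|_X\le AR^{-n-2s}\}$. By uniqueness in the $H^s$ ball, its fixed point is $w_\sigma$. Continuity of elements of $X$ is preserved under limits in the sup-weighted norm.

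The main obstacle is the pointwise bound $f\lesssim R^{-n-2s}W$. It must hold uniformly over $m$-point configurations, including the intermediate region between centers where no single profile dominates. The case split in that estimate has to be organized carefully, by the nearest center, so that the $R^{-n-2s}$ factor always comes from a profile evaluated at distance of order at least $R$.
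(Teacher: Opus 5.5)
Your argument follows the paper's proof. You use the same fixed-point map and the same contraction in an $H^s$ ball; centering the radius on $\eta=\|P_{\mathscr F^\perp}L^{-1}f\|_{H^s}$ instead of $\|f\|_{H^{-s}}$ just gives the upper half of \eqref{eq:correction-norm} directly. The second contraction is in a combined $H^s$--weighted metric via Lemma~\ref{lem:weighted-linear-small-alpha}, and the fixed point is identified through $H^s$ uniqueness, exactly as in the paper.

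One step in your bound $|f|\lesssim R^{-n-2s}W$ is wrong as written. Where $Q_i\ge Q_j$ one has $Q_j^\alpha Q_i\ge Q_i^\alpha Q_j$, since $(Q_i/Q_j)^{1-\alpha}\ge1$ for $\alpha<1$. So the term $\sum_{j\ne i}Q_j^\alpha Q_i$ is not smaller near $-y_i$. At $x=-y_i$ it can be of order $R^{-\alpha(n+2s)}$, far above the target $R^{-n-2s}$. The symmetric bound $D_\alpha(a,b)\lesssim a^\alpha b+b^\alpha a$ is therefore too crude there.

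The repair is already in your next sentence; just apply it everywhere rather than only in the far region. For each pair in Lemma~\ref{decoupling}, use $D_\alpha(a,b)\simeq\max\{a,b\}^\alpha\min\{a,b\}$. At any point $x$, the farther of the two centers is at distance at least $R/2$, so the smaller profile is $\lesssim R^{-n-2s}$, while $\max\{Q_j,Q_k\}^\alpha\lesssim W$. This is the paper's one-line argument, and it needs no case split by nearest center.
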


\begin{proof}
The projected equation is equivalent to $w=T(w)$, where
\[
 T(w):=
 \left[(\mathrm{id}-P_{\mathscr F^\perp}K)
          \big|_{\mathscr F^\perp}\right]^{-1}
 P_{\mathscr F^\perp}L^{-1}\bigl(f+N_\sigma(w)\bigr).
\]
Lemma~\ref{reduction} and \eqref{eq:remainder-bound} give
\[
 \|T(w)\|_{H^s}
 \lesssim\|f\|_{H^{-s}}+\|w\|_{H^s}^{1+\min\{\alpha,1\}}.
\]
Since $\|f\|_{H^{-s}}\to0$ by Lemma~\ref{eqf},
this map preserves the ball
$\|w\|_{H^s}\le C_*\|f\|_{H^{-s}}$ for fixed sufficiently large $C_*$
and large $R$. Equation~\eqref{eq:remainder-difference} makes it a
contraction there. The same difference estimate shows uniqueness
among all solutions with sufficiently small $H^s$ norm.

The fixed point equation gives
\[
 (\mathrm{id}-P_{\mathscr F^\perp}K)w_\sigma
 =P_{\mathscr F^\perp}L^{-1}
       \bigl(f+N_\sigma(w_\sigma)\bigr).
\]
Uniform boundedness and invertibility of the operator, together with
\eqref{eq:remainder-bound}, imply
\[
 \|w_\sigma\|_{H^s}\simeq
 \|P_{\mathscr F^\perp}L^{-1}f\|_{H^s},
\]
after absorbing the higher-order remainder. This proves
\eqref{eq:correction-norm}.

For $0<\alpha<\alpha_*$, restrict the same map to
\begin{equation}\label{eq:correction-ball}
 \|w\|_{H^s}\le C_*R^{n/2-(1+\alpha)(n+2s)},
 \qquad \|w\|_X\le C_*R^{-n-2s}.
\end{equation}
This closed subset of $\mathscr F^\perp\cap X$ is complete for the metric
induced by
\[
 R^{(1+\alpha)(n+2s)-n/2}\|w\|_{H^s}
       +R^{n+2s}\|w\|_X.
\]
Lemma~\ref{decoupling} and the two-profile bounds give
$|f|\lesssim R^{-n-2s}W$: in each pair, the smaller profile is at most
$CR^{-n-2s}$. Since $W\le C$ and $\alpha<1$, the scalar estimates used
in Lemma~\ref{lem:nonlinear-estimates} also yield
\begin{align*}
 \|N_\sigma(w)\|_X&\lesssim\|w\|_X^{1+\alpha},\\
 \|N_\sigma(w_1)-N_\sigma(w_2)\|_X
 &\lesssim(\|w_1\|_X^\alpha+\|w_2\|_X^\alpha)
             \|w_1-w_2\|_X.
\end{align*}
Here $f+N_\sigma(w)\in L^2\cap X$, so
Lemma~\ref{lem:weighted-linear-small-alpha} gives
\begin{align*}
 \|T(w)\|_X&\le CR^{-n-2s}+C\|w\|_X^{1+\alpha},\\
 \|T(w_1)-T(w_2)\|_X
 &\le CC_*^\alpha R^{-\alpha(n+2s)}\|w_1-w_2\|_X.
\end{align*}
Together with the $H^s$ estimates already proved and
Lemma~\ref{eqf}, these show that $T$ preserves \eqref{eq:correction-ball}
and is a contraction for the displayed metric, first choosing $C_*$
large and then $R$ large. Its fixed point equals $w_\sigma$ by the
$H^s$ uniqueness. This proves \eqref{eq:weighted-correction-bounds}.
\end{proof}

\begin{lemma}\label{lem:corrected-force}
Assume \eqref{eq:crit_exp}, and let $U_\sigma$ be the corrected
configuration from Lemma~\ref{lem:corrected-configuration}.
For sufficiently large $R$, uniformly in $i$,
\begin{equation}\label{eq:corrected-projection-expansion}
 \langle\mathcal E(U_\sigma),\nabla Q_i\rangle
 =-\int f\nabla Q_i\,dx+o(R^{-n-2s-1}),
\end{equation}
and
\begin{equation}\label{eq:corrected-residual-scale}
 \Gamma(U_\sigma)\simeq R^{-n-2s-1}.
\end{equation}
\end{lemma}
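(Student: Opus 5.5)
We plan to expand $\mathcal E(U_\sigma)$ around $\sigma$. Since $L\sigma=\sum_iQ_i^{1+\alpha}$, we have
\[
 \mathcal E(U_\sigma)=-f+Lw_\sigma-(1+\alpha)\sigma^\alpha w_\sigma-N_\sigma(w_\sigma).
\]
We pair this with $\nabla Q_i$. The key cancellation comes from $L\partial_{x_j}Q_i=(1+\alpha)Q_i^\alpha\partial_{x_j}Q_i$ together with $w_\sigma\in\mathscr F^\perp$. These give
\[
\langle Lw_\sigma,\partial_{x_j}Q_i\rangle=\langle w_\sigma,\partial_{x_j}Q_i\rangle_{H^s}=0
\quad\text{and}\quad
\int Q_i^\alpha w_\sigma\partial_{x_j}Q_i=0.
\]
Hence
\[
 \langle\mathcal E(U_\sigma),\nabla Q_i\rangle=-\int f\nabla Q_i-(1+\alpha)\int(\sigma^\alpha-Q_i^\alpha)w_\sigma\nabla Q_i-\langle N_\sigma(w_\sigma),\nabla Q_i\rangle .
\]
It remains to show that the last two terms are $o(R^{-n-2s-1})$. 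For the nonlinear term, \eqref{3.36} and \eqref{eq:correction-norm} give the bound $\|f\|_{H^{-s}}^2$. By Lemma~\ref{eqf} this is $R^{-2(n+2s)}\log R$ or $R^{n-2(1+\alpha)(n+2s)}$, and both are $o(R^{-n-2s-1})$. The first is clear. For the second, the exponent difference is $n-(1+2\alpha)(n+2s)+1$, which we need to be negative; this holds since $n<n+2s+ 2\alpha(n+2s)-1$ requires $2s+2\alpha(n+2s)>1$. This may fail for small $s,\alpha$, so in that regime we instead use a pointwise bound. With $|w_\sigma|\lesssim R^{-n-2s}W$ we get
\[
|N_\sigma(w_\sigma)|\lesssim |w_\sigma|^{1+\alpha}+\sigma^{\alpha-1}w_\sigma^2 .
\]
Integrating against $|\nabla Q_i|\lesssim(1+|x+y_i|)^{-n-2s-1}$ gives $O(R^{-(1+\alpha)(n+2s)})=o(R^{-n-2s-1})$ once $\alpha(n+2s)>1$. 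Otherwise we split into near and far regions and use $\sigma\gtrsim$ the weight near each center.

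The linear interaction term is the main obstacle. For $\alpha\ge\alpha_*$, \eqref{4.111} gives
\[
\|w\|^{\alpha+2}+R^{-n-2s}\|w\|^{\min\{\alpha,1\}},
\]
and with $\|w_\sigma\|_{H^s}\lesssim R^{-n-2s}(\log R)^{1/2}$ both pieces are $o(R^{-n-2s-1})$ because the extra powers of $\|w\|$ give negative powers of $R$ beyond $R^{-1}$. This needs $\alpha(n+2s)>1$; for $n\ge1$ and $\alpha\ge\alpha_*$ we have $\alpha(n+2s)\ge n/2$, which settles $n\ge3$. For $n\le2$ with $\alpha<1$ we use \eqref{eq:linear-defect}: Cauchy--Schwarz gives $R^{-n-2s}\|w_\sigma\|_{H^s}$, which is small. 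For $\alpha<\alpha_*$ we use the weighted bound \eqref{eq:weighted-correction-bounds} directly. We need
\[
\int(\sigma^\alpha-Q_i^\alpha)R^{-n-2s}W|\nabla Q_i|\,dx .
\]
Near $-y_i$, concavity gives $\sigma^\alpha-Q_i^\alpha\lesssim Q_i^{\alpha-1}R^{-n-2s}$, hence a contribution $R^{-2(n+2s)}$ times an integrable factor. Away from $-y_i$, $|\nabla Q_i|\lesssim R^{-n-2s-1}$ decays, and the contribution is $R^{-n-2s}\cdot R^{-n-2s-1}$ times $\int W\sigma^\alpha$ over the region; the growth of that integral is at most $R^{n-2\alpha(n+2s)}$, still giving $o(R^{-n-2s-1})$ after a short exponent check using $\alpha>0$. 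This proves \eqref{eq:corrected-projection-expansion}.

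For \eqref{eq:corrected-residual-scale}, the lower bound follows from $\Gamma(U_\sigma)\ge|\langle\mathcal E(U_\sigma),\nabla Q_i\rangle|/\|\nabla Q\|_{H^s}$, combined with the expansion and Lemma~\ref{lem:fQ}. For the upper bound, the fixed-point equation says $L^{-1}\mathcal E(U_\sigma)\in\mathscr F$. Writing
\[
L^{-1}\mathcal E(U_\sigma)=\sum c_{ij}\partial_{x_j}Q_i ,
\]
the isometry gives $\Gamma(U_\sigma)=\|\sum c_{ij}\partial_{x_j}Q_i\|_{H^s}\simeq|c|$, using the near-orthogonality of the Gram matrix. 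The coefficients $c$ are determined by the pairings $\langle\mathcal E(U_\sigma),\partial_{x_j}Q_a\rangle$ through the inverse of that Gram matrix. These pairings are $O(R^{-n-2s-1})$ by the expansion and \eqref{eq:projection-discrete-force}.
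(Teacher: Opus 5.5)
Your skeleton is the paper's: the same expansion of $\langle\mathcal E(U_\sigma),\nabla Q_i\rangle$ via $w_\sigma\in\mathscr F^\perp$ and $L\partial_{x_j}Q_i=(1+\alpha)Q_i^\alpha\partial_{x_j}Q_i$, the same treatment of $\alpha\ge\alpha_*$ through \eqref{3.36}, \eqref{4.111} and \eqref{eq:linear-defect}, and the same Gram-matrix argument for \eqref{eq:corrected-residual-scale}. Those parts are fine.

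\textbf{The gap.} It is the nonlinear term for $0<\alpha<\alpha_*$ in the range $2s+2\alpha(n+2s)\le1$, where you correctly note that $\|f\|_{H^{-s}}^2$ is not $o(R^{-n-2s-1})$. Your replacement integrates $|w_\sigma|^{1+\alpha}\lesssim R^{-(1+\alpha)(n+2s)}W^{1+\alpha}$ against $|\nabla Q_i|$. This yields $R^{-(1+\alpha)(n+2s)}$ and requires $\alpha(n+2s)>1$. But $\alpha(n+2s)>1$ already forces $2s+2\alpha(n+2s)>1$, so this bound never applies in the regime where it is needed. The closing sentence is not an argument, and its hint ``$\sigma\gtrsim$ the weight'' is false on $1\ll|x+y_i|\ll R$. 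There $\sigma\simeq(1+|x+y_i|)^{-n-2s}$, which is far smaller than $W\simeq(1+|x+y_i|)^{-\alpha(n+2s)}$.

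The missing point is that the $|w|^{1+\alpha}$ term must be dropped, not added. Since $\alpha<1$, the Taylor remainder satisfies $|N_\sigma(w)|\lesssim\min\{|w|^{1+\alpha},\sigma^{\alpha-1}w^2\}\le\sigma^{\alpha-1}w^2$ pointwise. Then $\sigma\ge Q_i$ and $|\nabla Q_i|\lesssim Q_i(1+|x+y_i|)^{-1}$ give
\[
 |N_\sigma(w_\sigma)|\,|\nabla Q_i|\lesssim(1+|x+y_i|)^{-\alpha(n+2s)-1}w_\sigma^2 .
\]
On $B_{R/2}(-y_i)$, inserting \eqref{eq:weighted-correction-bounds} bounds the integral by $R^{-2(n+2s)}\int_0^{R/2}r^{n-1}(1+r)^{-2\alpha(n+2s)-1}\,dr$. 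This is $o(R^{-n-2s-1})$ because $n+2s>1$ and $n<(1+2\alpha)(n+2s)$. Outside that ball the prefactor is $\lesssim R^{-\alpha(n+2s)-1}$, and $\|w_\sigma\|_{L^2}^2\lesssim R^{n-2(1+\alpha)(n+2s)}$ finishes. This is exactly how the paper treats the term, for all $\alpha<\alpha_*$ at once.

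\textbf{Smaller errors in the linear term.} Your estimate for $\alpha<\alpha_*$ reaches the right exponent, but two steps fail as written; both are fixable.

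Near $-y_i$, the factor $(1+|x+y_i|)^{-2\alpha(n+2s)-1}$ is not integrable when $2\alpha(n+2s)\le n-1$, which can happen for $n\ge2$. Over $B_{R/2}(-y_i)$ it contributes up to $R^{n-1-2\alpha(n+2s)}$. The resulting total $R^{n-1-2(1+\alpha)(n+2s)}$ is still $o(R^{-n-2s-1})$, so the conclusion survives.

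Away from $-y_i$, you replace $|\nabla Q_i|$ by $R^{-n-2s-1}$ and integrate $W\sigma^\alpha\simeq|x|^{-2\alpha(n+2s)}$ over an unbounded region. That integral diverges, since $2\alpha(n+2s)<n$. Your growth rate $R^{n-2\alpha(n+2s)}$ is correct only on the balls $B_{R/2}(-y_j)$, $j\ne i$; at infinity you must keep the decay $|\nabla Q_i|\lesssim(1+|x+y_i|)^{-n-2s-1}$. Alternatively, as the paper does, apply Cauchy--Schwarz with $\|(\sigma^\alpha-Q_i^\alpha)\nabla Q_i\|_{L^2(\mathbb R^n\setminus B_{R/2}(-y_i))}\lesssim R^{n/2-(1+\alpha)(n+2s)-1}$ and $\|w_\sigma\|_{L^2}\lesssim R^{n/2-(1+\alpha)(n+2s)}$.
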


\begin{proof}
Since \(w_\sigma\in \mathscr F^\perp\),
\begin{align*}
    \langle\mathcal E(U_\sigma),\nabla Q_i\rangle
    &=
    -\int_{\mathbb R^n}f\nabla Q_i\,dx-(1+\alpha)\int_{\mathbb R^n}
    (\sigma^\alpha-Q_i^\alpha)w_\sigma\nabla Q_i\,dx\\
    &\quad
    -\int_{\mathbb R^n}N_\sigma(w_\sigma)\nabla Q_i\,dx.
\end{align*}

First suppose $\alpha\ge\alpha_*$. Equations~\eqref{3.36} and
\eqref{4.111}, together with \eqref{eq:correction-norm} and
Lemma~\ref{eqf}, bound the two error terms by
\[
 C\|f\|_{H^{-s}}^2
       +CR^{-n-2s}\|f\|_{H^{-s}}^{\min\{\alpha,1\}}.
\]
For $n\le2$ and $0<\alpha<1$, use \eqref{eq:linear-defect}
instead, replacing the last exponent by $1$.
In the present range,
$\|f\|_{H^{-s}}\lesssim R^{-n-2s}(1+\log R)^{1/2}$.
Since $n+2s>1$, its square and its product with $R^{-n-2s}$
are $o(R^{-n-2s-1})$.
In the remaining case $n\ge3$ and $\alpha_*\le\alpha<1$,
$\alpha(n+2s)\ge n/2>1$, so
\[
 R^{-n-2s}\|f\|_{H^{-s}}^\alpha
 \lesssim R^{-(1+\alpha)(n+2s)}(1+\log R)^{\alpha/2}
 =o(R^{-n-2s-1}).
\]
This proves \eqref{eq:corrected-projection-expansion} for
$\alpha\ge\alpha_*$, including the endpoint.

It remains to treat $0<\alpha<\alpha_*$. Fix $i$.
The global Taylor bound and $\sigma\ge Q_i$ give
\[
 |N_\sigma(w_\sigma)|\,|\nabla Q_i|
 \lesssim(1+|x+y_i|)^{-\alpha(n+2s)-1}|w_\sigma|^2.
\]
On $B_{R/2}(-y_i)$, concavity and
\eqref{eq:weighted-correction-bounds} give
\begin{align*}
 (\sigma^\alpha-Q_i^\alpha)|\nabla Q_i|
 &\lesssim R^{-n-2s}(1+|x+y_i|)^{-\alpha(n+2s)-1},\\
 |w_\sigma(x)|
 &\lesssim R^{-n-2s}(1+|x+y_i|)^{-\alpha(n+2s)}.
\end{align*}
Both core error terms are therefore bounded by the same radial integral:
\begin{align*}
 &\int_{B_{R/2}(-y_i)}
 \bigl((\sigma^\alpha-Q_i^\alpha)|w_\sigma|\,|\nabla Q_i|
       +|N_\sigma(w_\sigma)|\,|\nabla Q_i|\bigr)\,dx\\
 &\quad\lesssim R^{-2(n+2s)}
       \int_0^{R/2}r^{n-1}(1+r)^{-2\alpha(n+2s)-1}\,dr\\
 &\quad\lesssim
 \begin{cases}
 R^{n-2(1+\alpha)(n+2s)-1},&n>2\alpha(n+2s)+1,\\
 R^{-2(n+2s)}\log R,&n=2\alpha(n+2s)+1,\\
 R^{-2(n+2s)},&n<2\alpha(n+2s)+1
 \end{cases}\\
 &\quad=o(R^{-n-2s-1}).
\end{align*}
For the nonlinear term, the extra factor
$(1+|x+y_i|)^{-\alpha(n+2s)}\le1$ has simply been discarded.

On the complement, use
\[
 \sigma^\alpha-Q_i^\alpha
 \le(\sigma-Q_i)^\alpha
 \lesssim\sum_{j\ne i}(1+|x+y_j|)^{-\alpha(n+2s)}.
\]
Splitting into balls around the other centers and their complement gives
\[
 \|(\sigma^\alpha-Q_i^\alpha)\nabla Q_i
       \|_{L^2(\mathbb R^n\setminus B_{R/2}(-y_i))}
 \lesssim R^{n/2-(1+\alpha)(n+2s)-1}.
\]
Together with $\|w_\sigma\|_{L^2}\lesssim
R^{n/2-(1+\alpha)(n+2s)}$, this bounds the exterior linear error by
$CR^{n-2(1+\alpha)(n+2s)-1}$.
The global nonlinear bound above gives
\begin{align*}
 \int_{\mathbb R^n\setminus B_{R/2}(-y_i)}
       |N_\sigma(w_\sigma)|\,|\nabla Q_i|\,dx
 &\lesssim R^{-\alpha(n+2s)-1}\|w_\sigma\|_{L^2}^2\\
 &\lesssim R^{n-(2+3\alpha)(n+2s)-1}.
\end{align*}
Both exterior bounds are $o(R^{-n-2s-1})$.
This completes the proof of \eqref{eq:corrected-projection-expansion}.

Finally, \eqref{eq:projected-correction} implies
$L^{-1}\mathcal E(U_\sigma)\in\mathscr F$.
Uniform bounds for the Gram matrix of the translation modes give
\begin{equation}\label{eq:residual-projection-norm}
 \Gamma(U_\sigma)
 \simeq\left(\sum_{i=1}^m\sum_{j=1}^n
 |\langle\mathcal E(U_\sigma),\partial_{x_j}Q_i\rangle|^2
 \right)^{1/2}.
\end{equation}
Equation~\eqref{eq:corrected-projection-expansion} and
Lemma~\ref{lem:fQ} now prove \eqref{eq:corrected-residual-scale}.
\end{proof}

\begin{proof}[Proof of Theorem~\ref{thm-quantitative}]
Choose the centers by the minimization argument at the start of this
section. Then $\rho=u-\sigma\in\mathscr F^\perp$,
$\|\rho\|_{H^s}\le\varepsilon_0$, and $R\ge R_0$.
By Lemma~\ref{eqf} and local Lipschitz continuity of $\mathcal E$,
$\Gamma(u)\lesssim\varepsilon_0+\|f\|_{H^{-s}}$ is small.

Let $w_\sigma$ be given by Lemma~\ref{lem:corrected-configuration}.
Subtracting the projected equations for $\rho$ and $w_\sigma$ gives
\[
 (\mathrm{id}-P_{\mathscr F^\perp}K)(\rho-w_\sigma)
 =P_{\mathscr F^\perp}L^{-1}
       \bigl(\mathcal E(u)+N_\sigma(\rho)-N_\sigma(w_\sigma)\bigr).
\]
Lemma~\ref{reduction} and \eqref{eq:remainder-difference} yield
\begin{align*}
 \|\rho-w_\sigma\|_{H^s}
 &\lesssim\Gamma(u)\\
 &\quad+
 \bigl(\|\rho\|_{H^s}^{\min\{\alpha,1\}}
       +\|w_\sigma\|_{H^s}^{\min\{\alpha,1\}}\bigr)
       \|\rho-w_\sigma\|_{H^s}.
\end{align*}
Choose $\varepsilon_0$ small and $R_0$ large. Absorption gives
\begin{equation}\label{eq:comparison-corrected-profile}
 \|u-U_\sigma\|_{H^s}=\|\rho-w_\sigma\|_{H^s}\lesssim\Gamma(u).
\end{equation}
The relevant $H^s$ norms are uniformly bounded, so local Lipschitz
continuity of $\mathcal E$ implies
$\Gamma(U_\sigma)\lesssim\Gamma(u)$.
Lemma~\ref{lem:corrected-force} therefore gives
\begin{equation}\label{hlower}
 R^{-n-2s-1}\lesssim\Gamma(u).
\end{equation}
Finally, \eqref{eq:correction-norm} and Lemma~\ref{eqf} imply
\begin{align*}
 \|\rho\|_{H^s}
 &\le\|w_\sigma\|_{H^s}+\|\rho-w_\sigma\|_{H^s}\\
 &\lesssim\|f\|_{H^{-s}}+\Gamma(u)\\
 &\lesssim\Phi(R^{-n-2s-1})+\Gamma(u)
 \lesssim\Phi(\Gamma(u)).
\end{align*}
The last step uses \eqref{hlower}, monotonicity of $\Phi$ near zero,
$\Phi(ct)\simeq\Phi(t)$ for fixed $c>0$, and $t\lesssim\Phi(t)$.
This proves all three regimes, including the logarithmic endpoint.
\end{proof}

\section{The proof of Theorem~\ref{sharp}}\label{sec4}

We construct examples for every fixed number of peaks and every admissible
exponent. Throughout this section, choose
\[
    y_{i,R}=(i-1)Re_1,\qquad
    Q_{i,R}=Q(\cdot+y_{i,R}),\qquad
    \sigma_R=\sum_{i=1}^m Q_{i,R}.
\]
Let $f_R=\sigma_R^{1+\alpha}-\sum_iQ_{i,R}^{1+\alpha}$, and let
$\mathscr F_R$ be the span of the translation modes of this configuration.
In particular,
\[
    A(R)\simeq\Phi(R^{-n-2s-1}),\qquad
    R^{-n-2s-1}=o(A(R)),
\]
where
\begin{equation*}
A(R):=\begin{cases}
        R^{-(n+2s)},
        &\displaystyle \alpha>\alpha_* ,\\[3pt]
        R^{-(n+2s)}(\log R)^{\frac{1}{2}},
        &\displaystyle \alpha=\alpha_*,\\[3pt]
        R^{\frac{n}{2}-(1+\alpha)(n+2s)},
        &\displaystyle 0<\alpha<\alpha_*.
    \end{cases}
\end{equation*}
\begin{proposition}\label{prop:sharp-profiles}
For sufficiently large $R$, set
\[
 \rho_R=w_{\sigma_R},\qquad U_R=\sigma_R+\rho_R,
\]
where the correction is given by Lemma~\ref{lem:corrected-configuration}.
Then $\rho_R\in\mathscr F_R^\perp$,
\[
 P_{\mathscr F_R^\perp}L^{-1}\mathcal E(U_R)=0,
\]
and
\begin{equation}\label{eq:sharp-profile-estimates}
 \|\rho_R\|_{H^s}\simeq A(R),\qquad
 \Gamma(U_R)\simeq R^{-n-2s-1}.
\end{equation}
Consequently, $\|\rho_R\|_{H^s}\simeq\Phi(\Gamma(U_R))$.
\end{proposition}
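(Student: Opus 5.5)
Most of the statement follows directly from Lemma~\ref{lem:corrected-configuration} and Lemma~\ref{lem:corrected-force} applied to the collinear configuration $y_{i,R}=(i-1)Re_1$, whose minimum separation is exactly $R$. Lemma~\ref{lem:corrected-configuration} gives $\rho_R=w_{\sigma_R}\in\mathscr F_R^\perp$ and $P_{\mathscr F_R^\perp}L^{-1}\mathcal E(U_R)=0$. Lemma~\ref{lem:corrected-force} then yields $\Gamma(U_R)\simeq R^{-n-2s-1}$. The upper bound $\|\rho_R\|_{H^s}\lesssim\|f_R\|_{H^{-s}}\simeq A(R)$ comes from \eqref{eq:correction-norm} together with Lemma~\ref{eqf}. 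What remains is the lower bound $\|\rho_R\|_{H^s}\gtrsim A(R)$. By \eqref{eq:correction-norm}, this reduces to
\[
 \|P_{\mathscr F_R^\perp}L^{-1}f_R\|_{H^s}\gtrsim\|f_R\|_{H^{-s}}.
\]

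To prove this, I would write
\[
 \|P_{\mathscr F_R^\perp}L^{-1}f_R\|_{H^s}\ge\|L^{-1}f_R\|_{H^s}-\|P_{\mathscr F_R}L^{-1}f_R\|_{H^s}.
\]
Since $L$ is an isometry, the first term equals $\|f_R\|_{H^{-s}}\simeq A(R)$. The projection onto $\mathscr F_R$ is controlled through the uniformly invertible Gram matrix, as in \eqref{eq:residual-projection-norm}:
\[
 \|P_{\mathscr F_R}L^{-1}f_R\|_{H^s}\simeq\Bigl(\sum_{i,j}\Bigl|\int f_R\,\partial_{x_j}Q_{i,R}\,dx\Bigr|^2\Bigr)^{1/2}\lesssim R^{-n-2s-1}.
\]
The final bound is the upper estimate in Lemma~\ref{lem:fQ}. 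Because $R^{-n-2s-1}=o(A(R))$, the difference is at least $cA(R)$ for large $R$. This gives $\|\rho_R\|_{H^s}\simeq A(R)$.

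The consequence then follows. We have $A(R)\simeq\Phi(R^{-n-2s-1})$, and $\Phi(ct)\simeq\Phi(t)$ for fixed $c>0$ because $\Phi$ is a power times a slowly varying logarithm. Since $\Gamma(U_R)\simeq R^{-n-2s-1}$, this gives $\|\rho_R\|_{H^s}\simeq\Phi(\Gamma(U_R))$. I do not expect a serious obstacle. The only point needing care is the comparability $\Phi(ct)\simeq\Phi(t)$ in the logarithmic case, which reduces to checking that $(1+|\log ct|)/(1+|\log t|)$ stays bounded above and below as $t\to0$.
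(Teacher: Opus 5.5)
Your proposal is correct and follows the paper's proof essentially step for step. The structural claims and the upper bound come from Lemmas~\ref{lem:corrected-configuration}, \ref{lem:corrected-force} and \ref{eqf}. The lower bound comes from $\|P_{\mathscr F_R^\perp}L^{-1}f_R\|_{H^s}\ge\|f_R\|_{H^{-s}}-\|P_{\mathscr F_R}L^{-1}f_R\|_{H^s}$, where the Gram-matrix bound and Lemma~\ref{lem:fQ} make the subtracted term $O(R^{-n-2s-1})=o(A(R))$. Your explicit check that $\Phi(ct)\simeq\Phi(t)$ fills in a step the paper leaves implicit in its final sentence.
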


\begin{proof}
Lemma~\ref{eqf} gives $\|f_R\|_{H^{-s}}\simeq A(R)$.
Existence, orthogonality, and the upper bound for $\rho_R$ follow from
Lemma~\ref{lem:corrected-configuration}; the residual estimate is
Lemma~\ref{lem:corrected-force}.
For the remaining lower bound, the uniformly invertible Gram matrix
and Lemma~\ref{lem:fQ} give
\[
 \|P_{\mathscr F_R}L^{-1}f_R\|_{H^s}
 \lesssim
 \left(\sum_{i=1}^m\sum_{j=1}^n
       \left|\int f_R\partial_{x_j}Q_{i,R}\,dx\right|^2\right)^{1/2}
 \lesssim R^{-n-2s-1}.
\]
Equation~\eqref{eq:correction-norm} therefore implies
\begin{align*}
 \|\rho_R\|_{H^s}
 &\simeq\|P_{\mathscr F_R^\perp}L^{-1}f_R\|_{H^s}\\
 &\ge\|f_R\|_{H^{-s}}-\|P_{\mathscr F_R}L^{-1}f_R\|_{H^s}
 \gtrsim A(R),
\end{align*}
since $R^{-n-2s-1}=o(A(R))$.
The last assertion follows from $A(R)\simeq\Phi(R^{-n-2s-1})$.
\end{proof}

We next verify that the correction measures the distance to all sums of
$m$ translates, rather than only to the particular configuration
$\sigma_R$.

\begin{lemma}\label{lem:sharp-distance}
For $U_R=\sigma_R+\rho_R$ as above,
\[
    d_m(U_R)\simeq\|\rho_R\|_{H^s}.
\]
\end{lemma}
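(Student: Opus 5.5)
The upper bound $d_m(U_R)\le\|U_R-\sigma_R\|_{H^s}=\|\rho_R\|_{H^s}$ is immediate from the definition \eqref{eq:distance}. The content is the lower bound: no other configuration $\sigma'=\sum_iQ(\cdot+y_i')$ approximates $U_R$ much better than $\sigma_R$ does.

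Since $\|\rho_R\|_{H^s}\simeq A(R)\to0$, any configuration with $\|U_R-\sigma'\|_{H^s}\le 2d_m(U_R)$ lies within $o_R(1)$ of $\sigma_R$ in $H^s$. I would begin with the matching argument from the start of Section~\ref{sec3} (positivity, separation, peak matching). After a permutation, it gives $|y_i'-y_{i,R}|=o_R(1)$, and in particular the separation of $\sigma'$ is comparable to $R$. Next I apply the minimization from Section~\ref{sec3} to $u=U_R$. This produces a configuration $\sigma^*$ with parameters $y_i^*$ attaining $d_m(U_R)=\|U_R-\sigma^*\|_{H^s}$ and satisfying the orthogonality \eqref{eq:modulation} with respect to its own translation modes. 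The task is then to show $\|U_R-\sigma^*\|_{H^s}\gtrsim\|\rho_R\|_{H^s}$.

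Write $\tau_i=y_i^*-y_{i,R}$, so that $|\tau|=o_R(1)$. Taylor expansion gives
\[
\sigma^*-\sigma_R=\sum_i\tau_i\cdot\nabla Q_{i,R}+O(|\tau|^2)\quad\text{in }H^s,
\]
and hence
\[
U_R-\sigma^*=\rho_R-\sum_i\tau_i\cdot\nabla Q_{i,R}+O(|\tau|^2).
\]
Since $\rho_R\in\mathscr F_R^\perp$ and the Gram matrix of the translation modes is uniformly invertible, Pythagoras yields
\[
\|U_R-\sigma^*\|_{H^s}^2\ge\|\rho_R\|_{H^s}^2+c|\tau|^2-C|\tau|^2\bigl(\|\rho_R\|_{H^s}+|\tau|\bigr)\gtrsim\|\rho_R\|_{H^s}^2 .
\]
To control the cross term I would use exactly the expansion of $\|\rho_R-\sum_i\tau_i\cdot\nabla Q_{i,R}+O(|\tau|^2)\|^2$. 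The only interaction between $\rho_R$ and the $O(|\tau|^2)$ remainder is bounded by $C|\tau|^2\|\rho_R\|_{H^s}$, which is absorbed into $c|\tau|^2$ because $\|\rho_R\|_{H^s}$ is small. This gives $d_m(U_R)\gtrsim\|\rho_R\|_{H^s}$.

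The step I expect to be the main obstacle is the localization $|\tau|=o_R(1)$, since the global infimum must not be achieved by some unrelated configuration. I would handle it by contradiction. Suppose configurations within $o(1)$ in $H^s$ had some $|\tau_i|\ge\delta$. Translating to the $i$-th peak and passing to a weak limit, we would get $Q(\cdot+\tau)=Q$ plus contributions from other peaks that escape to infinity, because the separation of $\sigma'$ is forced to be large by matching. This would force $\tau=0$, a contradiction. Uniformity in $R$ follows because all estimates used (the Gram matrix bounds, the Taylor remainder of $y\mapsto Q(\cdot+y)$ in $H^s$, which uses $Q\in H^{2s+1}$) are translation invariant.
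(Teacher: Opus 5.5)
Your proposal is correct and follows essentially the same route as the paper: you localize the competing centers by matching so that the displacements tend to zero uniformly in $R$, expand the translation map to first order in $H^s$, and combine $\rho_R\in\mathscr F_R^\perp$ with the uniform Gram bound. The paper differs only cosmetically, by working with any configuration within $2\|\rho_R\|_{H^s}$ of $U_R$ (so no attained minimizer is needed) and using the triangle inequality instead of expanding the square.

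One small correction: $Q\in H^{2s+1}$ gives only an $O(|\tau|^{1+s})$ Taylor remainder in $H^s$. Your $O(|\tau|^2)$ claim therefore needs $Q\in H^{s+2}$, which is available by bootstrapping the equation. Alternatively, keep an $o(|\tau|)$ remainder $r$ as the paper does and absorb the cross term $\langle\rho_R,r\rangle_{H^s}$ by Young's inequality into $\|\rho_R\|_{H^s}^2+|\tau|^2$.
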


\begin{proof}
The upper bound follows by choosing $z_i=y_{i,R}$. For the lower bound,
it is enough to consider configurations satisfying
\[
    \left\|U_R-\sum_iQ(\cdot+z_i)\right\|_{H^s}
       \le2\|\rho_R\|_{H^s}.
\]
For such configurations,
\[
    \left\|\sigma_R-\sum_iQ(\cdot+z_i)\right\|_{H^s}
       \le3\|\rho_R\|_{H^s}\to 0.
\]
We first show that, after a permutation,
$z_i=y_{i,R}+a_i$ with $\max_i|a_i|\to0$, uniformly among the
configurations under consideration. This can be checked along any
sequence $R\to\infty$. Near each center $-y_{i,R}$, the translated
function $\sigma_R(\cdot-y_{i,R})$ tends locally to $Q$.
If all the competing centers escaped that center, their translated sum
would tend locally to zero, contradicting the preceding $L^2$ convergence.
Thus every original center has a competing center at bounded distance.
Since the original centers separate and there are exactly $m$ competing
centers, this gives a bijective matching for large $R$.
After extracting a subsequence, each bounded displacement tends to some
$a_i$. Local convergence then gives $Q(\cdot+a_i)=Q$, hence $a_i=0$
by the strict radial decrease of $Q$. A contradiction argument gives the
asserted uniform convergence of the displacements.

Since $\partial_{x_j}Q\in H^s$, translation is continuously
differentiable as a map into $H^s$. Therefore
\[
    \left\|\sum_iQ(\cdot+y_{i,R}+a_i)-\sigma_R
       -\sum_i a_i\cdot\nabla Q_{i,R}\right\|_{H^s}
    =o\left(\Bigl(\sum_i|a_i|^2\Bigr)^{1/2}\right)
\]
as $\max_i|a_i|\to0$, uniformly in $R$.
Uniformity follows from translation invariance and the fixed number of peaks.
The Gram matrix bound gives
\[
    \left\|\sum_i a_i\cdot\nabla Q_{i,R}\right\|_{H^s}
       \ge c\Bigl(\sum_i|a_i|^2\Bigr)^{1/2}.
\]
Thus, for large $R$, the Taylor remainder is at most half the norm of
the linear term. Since $\rho_R$ is orthogonal to that term,
\begin{align*}
    \|U_R-\sum_iQ(\cdot+z_i) \|_{H^s}
    &\ge\left\|\rho_R-\sum_i a_i\cdot\nabla Q_{i,R}\right\|_{H^s}
       -\frac12\left\|\sum_i a_i\cdot\nabla Q_{i,R}\right\|_{H^s}\\
    &\ge\frac{1}{2}
       \left(\|\rho_R\|_{H^s}^2+
       \left\|\sum_i a_i\cdot\nabla Q_{i,R}\right\|_{H^s}^2\right)^{\frac{1}{2}}
     \ge\frac{1}{2}\|\rho_R\|_{H^s}.
\end{align*}
This proves the lower bound without assuming that the infimum is attained.
\end{proof}

\begin{proof}[Proof of Theorem~\ref{sharp}]
Proposition~\ref{prop:sharp-profiles} and Lemma~\ref{lem:sharp-distance}
give the desired examples among real-valued functions. We show that
their positive parts have the same properties. Set
\[
    u_R=U_R^+.
\]
Lipschitz truncation preserves $H^s$ for $0<s<1$. The scalar inequality
\[
    (a-b)(-a^-+b^-)\ge|a^--b^-|^2
\]
and the Gagliardo representation of the fractional energy imply
\[
    \|U_R^-\|_{H^s}^2\le\langle LU_R,-U_R^-\rangle.
\]
Testing the residual $\mathcal E(U_R)$ against $-U_R^-$ thus gives
\[
    \|U_R^-\|_{H^s}^2
       \le\Gamma(U_R)\|U_R^-\|_{H^s}
          +\int (U_R^-)^{\alpha+2}\,dx.
\]
Because $\sigma_R\ge0$, we have $U_R^-\le|\rho_R|$. H\"older's inequality
and Sobolev embedding yield
\begin{align*}
    \int (U_R^-)^{\alpha+2}\,dx
       &\le\int|\rho_R|^\alpha (U_R^-)^2\,dx\\
       &\le\|\rho_R\|_{L^{\alpha+2}}^\alpha\|U_R^-\|_{L^{\alpha+2}}^2
       \lesssim\|\rho_R\|_{H^s}^\alpha\|U_R^-\|_{H^s}^2.
\end{align*}
Since $\|\rho_R\|_{H^s}\to0$, absorption gives
\begin{equation}\label{eq:u-minus-decay}
    \|U_R^-\|_{H^s}\lesssim\Gamma(U_R)\lesssim R^{-n-2s-1}.
\end{equation}
The signed nonlinearity satisfies
$|U_R|^\alpha U_R=u_R^{1+\alpha}-(U_R^-)^{1+\alpha}$, whence
\[
    \mathcal E(u_R)-\mathcal E(U_R)=L(U_R^-)-(U_R^-)^{1+\alpha}.
\]
Using the dual Sobolev embedding, we obtain
\begin{equation}\label{eq:residual-upper-bound}
    \Gamma(u_R)
       \le\Gamma(U_R)+\|U_R^-\|_{H^s}
          +C\|U_R^-\|_{H^s}^{1+\alpha}
       \lesssim R^{-n-2s-1}.
\end{equation}

The distance to any subset of a normed space is a $1$-Lipschitz function.
Lemma~\ref{lem:sharp-distance} and \eqref{eq:u-minus-decay} therefore give
\begin{align*}
    d_m(u_R)
    &\ge c\|\rho_R\|_{H^s}-\|U_R^-\|_{H^s}\\
    &\ge c A(R),
\end{align*}
for sufficiently large $R$, since $R^{-n-2s-1}=o(A(R))$.
Also,
\[
    \|u_R-\sigma_R\|_{H^s}
       \le\|\rho_R\|_{H^s}+\|U_R^-\|_{H^s}
       \lesssim A(R)\to 0.
\]
The defect $\Gamma(u_R)$ is positive: otherwise
Theorem~\ref{thm-quantitative} would give zero distance to the manifold,
contradicting the preceding lower bound. Finally, $\Phi$ is increasing
near zero and $\Phi(ct)\simeq\Phi(t)$ for fixed $c>0$. Hence
\eqref{eq:residual-upper-bound} implies
\[
    \Phi(\Gamma(u_R))\lesssim\Phi(R^{-n-2s-1})\simeq A(R).
\]
Combining these estimates proves the asserted sharp lower bound for
nonnegative functions, for every fixed $m\ge2$ and every admissible
$\alpha$.
\end{proof}

\section*{Declarations}
On behalf of all authors, the corresponding author states that there
is no conflict of interest. Our manuscript has no associated data.

\section*{Acknowledgements}
Hua Chen is supported by the National Natural Science Foundation of China
(Grant Nos.~12571249, 12221001 and 12131017) and the National Key R\&D
Program of China (Grant No.~2022YFA1005602).
Xin Liao is supported by the National Natural Science Foundation of China
(Grant Nos.~12601372 and 12671239).
The research of Yun-Lu Fan is supported by the Postdoctoral Fellowship
Program of the China Postdoctoral Science Foundation
(Grant No.~GZC20261648).

\end{document}